\newcommand{\B}{\mathrm{B}}
\newcommand{\C}{\mathrm{C}}
\newcommand{\bC}{\mathbf{C}}
\newcommand{\bL}{\mathbb{L}}
\newcommand{\N}{\mathrm{N}}
\newcommand{\bT}{\mathbb{T}}
\newcommand{\T}{\mathrm{T}}
\newcommand{\cC}{\mathcal{C}}
\newcommand{\cN}{\mathcal{N}}
\newcommand{\cO}{\mathcal{O}}
\renewcommand{\b}{\mathfrak{b}}
\newcommand{\g}{\mathfrak{g}}
\newcommand{\h}{\mathfrak{h}}
\newcommand{\m}{\mathfrak{m}}
\newcommand{\n}{\mathfrak{n}}
\newcommand{\p}{\mathfrak{p}}
\renewcommand{\u}{\mathfrak{u}}
\renewcommand{\d}{\mathrm{d}}
\newcommand{\ddr}{\mathrm{d}_{\mathrm{dR}}}
\newcommand{\Ad}{\mathrm{Ad}}
\newcommand{\aff}{\mathrm{aff}}
\newcommand{\Bun}{\mathrm{Bun}}
\newcommand{\coev}{\mathrm{coev}}
\newcommand{\DR}{\mathrm{DR}}
\newcommand{\ev}{\mathrm{ev}}
\newcommand{\id}{\mathrm{id}}
\newcommand{\IsotCorr}{\mathrm{IsotCorr}}
\newcommand{\LagrCorr}{\mathrm{LagrCorr}}
\newcommand{\Lie}{\mathrm{Lie}}
\newcommand{\Map}{\mathrm{Map}}
\newcommand{\pt}{\mathrm{pt}}
\newcommand{\Sym}{\mathrm{Sym}}
\newcommand{\QCoh}{\mathrm{QCoh}}
\DeclareMathOperator{\Rep}{Rep}
\DeclareMathOperator{\Spec}{Spec}
\newtheorem{thm}{Theorem}[section]
\newtheorem*{theorm}{Theorem}
\newaliascnt{lm}{thm}
\newaliascnt{prop}{thm}
\newaliascnt{cor}{thm}
\newtheorem{lm}[lm]{Lemma}
\newtheorem{prop}[prop]{Proposition}
\newtheorem{cor}[cor]{Corollary}
\theoremstyle{remark}
\newtheorem*{remark}{Remark}
\theoremstyle{definition}
\newtheorem{defn}{Definition}[section]
\newtheorem{example}{Example}[section]
\begin{document}
\title[Symplectic implosion and Grothendieck--Springer resolution]{Symplectic implosion and the Grothendieck--Springer resolution}
\author{Pavel Safronov}
\address{Mathematical Institute, Radcliffe Observatory Quarter, Woodstock Road, Oxford UK, OX2 6GG}
\email{safronov@maths.ox.ac.uk}
\maketitle

\begin{abstract}
We prove that the Grothendieck--Springer simultaneous resolution viewed as a correspondence between the adjoint quotient of a Lie algebra and its maximal torus is Lagrangian in the sense of shifted symplectic structures. As Hamiltonian spaces can be interpreted as Lagrangians in the adjoint quotient, this allows one to reduce a Hamiltonian $G$-space to a Hamiltonian $H$-space where $H$ is the maximal torus of $G$. We show that this procedure coincides with an algebraic version of symplectic implosion of Guillemin, Jeffrey and Sjamaar. We explain how to obtain generalizations of this picture to quasi-Hamiltonian spaces and their elliptic version.
\end{abstract}

\section*{Introduction}

The goal of this paper is to introduce symplectic implosion in the realm of derived symplectic geometry.

\subsection*{Derived symplectic geometry}

Pantev, To\"{e}n, Vaqui\'{e} and Vezzosi \cite{PTVV} introduced the notions of closed differential forms on derived stacks and defined shifted symplectic structures on such stacks. As in the classical context, a symplectic structure is a closed non-degenerate two-form on the stack, but now the form can have a nontrivial cohomological degree. Moreover, the form is not strictly closed, but closed only up to homotopy.

One can also introduce Lagrangians in a shifted symplectic stack $X$. These are morphisms $f\colon L\rightarrow X$ together with a trivialization of the pullback of the symplectic form from $X$ to $L$; moreover, we require the trivialization to be non-degenerate in a certain sense. Note that Lagrangians $L\rightarrow X$ are not necessarily embeddings: for instance, if $L\rightarrow X$ is a Lagrangian in an $n$-shifted symplectic stack $X$ and $Y$ is an $(n-1)$-shifted symplectic stack, then $L\times Y\rightarrow X$ is also Lagrangian. Moreover, in contrast to the classical setting, being a Lagrangian is not a property but an extra structure.

The key result about derived Lagrangians is the fact that their derived intersection carries a shifted symplectic structure. More precisely, if we have two Lagrangians $L_1,L_2$ in an $n$-shifted symplectic stack $X$, the derived intersection $L_1\times_X L_2$ is $(n-1)$-shifted symplectic. More generally, if $X\leftarrow L\rightarrow Y$ is a Lagrangian correspondence and $N\rightarrow Y$ is another Lagrangian, $L\times_Y N\rightarrow X$ also carries a Lagrangian structure. This should be contrasted to the case of ordinary differential geometry where the intersection of manifolds is a manifold only if the intersection is transverse.

\subsection*{Hamiltonian reduction}

Let us now explain how shifted symplectic structures give a new point of view on Hamiltonian reduction (this perspective can be found in \cite{Cal} and \cite{Saf}).

Let $\B G = [\pt / G]$ be the classifying stack of an algebraic group $G$. We can identify the coadjoint quotient \[[\g^*/G]\cong \T^*[1](\B G),\] so it carries a 1-shifted symplectic structure generalizing the classical construction of the symplectic structure on a cotangent bundle. Given a $G$-equivariant map $\mu\colon M\rightarrow \g^*$ one can ask when the induced map $\mu\colon [M/G]\rightarrow [\g^*/G]$ carries a Lagrangian structure. An easy computation shows that a Lagrangian structure on $[M/G]\rightarrow [\g^*/G]$ is the same as a closed $G$-invariant two-form on $M$ satisfying the moment map equation. In other words, Lagrangians in $[\g^*/G]$ are identified with Hamiltonian $G$-spaces, i.e. $G$-spaces $M$ together with a symplectic structure on $M$ preserved by $G$ and a $G$-equivariant moment map $\mu\colon M\rightarrow \mathfrak{g}^*$ which is a Hamiltonian for the $G$-action.

Given a Hamiltonian $G$-space its Hamiltonian reduction is defined to be
\[M//G = [\mu^{-1}(0)/G] \cong [M/G] \times_{[\g^*/G]} [\pt/G].\]

Both $[M/G]$ and $[\pt/G]$ are Lagrangians in $[\g^*/G]$, so $M//G$ is a Lagrangian intersection which, therefore, carries a symplectic structure. This recovers the standard symplectic structure on the Hamiltonian reduction when $0$ is a regular value for the moment map $\mu$ and the $G$-action on $\mu^{-1}(0)$ is free.

This picture generalizes to quasi-Hamiltonian reduction which is concerned with group-valued moment maps $M\rightarrow G$. In that case 
\[\left[\frac{G}{G}\right] = \B G \times_{\B G\times \B G} \B G\]
is the self-intersection of the diagonal in $\B G\times \B G$, a 2-shifted symplectic stack, so it carries a 1-shifted symplectic structure. Asking the same question for $G$-equivariant maps $M\rightarrow G$ we get that Lagrangians in $\left[\frac{G}{G}\right]$ are the same as quasi-Hamiltonian $G$-spaces.

One can perform Hamiltonian reduction given any two Lagrangians in a 1-shifted symplectic stack. Another example of a 1-shifted symplectic stack is $\Bun_G(E)$, the moduli stack of $G$-bundles on an elliptic curve. Such ``elliptic'' Hamiltonian reduction is useful to construct symplectic structures on the moduli stacks of $G$-bundles on del Pezzo surfaces (see \cite{Cal}).

\subsection*{Symplectic implosion}

Symplectic implosion was introduced by Guillemin, Jeffrey and Sjamaar as a way to produce Hamiltonian $T$-spaces out of Hamiltonian $K$-spaces, where $T\subset K$ is the maximal torus in a compact Lie group $K$. It is defined in a rather ad hoc way as a symplectic completion of the cross-section, which is roughly the preimage of the fundamental Weyl chamber under the moment map. It can also be interpreted as a $K$-Hamiltonian reduction with respect to $(\T^*K)_{impl}$, the universal implosion space. In \cite{GJS} it was shown that $(\T^*K)_{impl}\cong [G/N]^{\aff}$, the affinization of the base affine space, where $G$ is a complex reductive group having $K$ as a maximal compact subgroup and $N\subset G$ is the maximal unipotent subgroup.

Symplectic implosion was generalized to the hyperK\"{a}hler setting for $K=SU(n)$ by Dancer, Kirwan and Swann \cite{DKS}. As one expects hyperK\"{a}hler implosion for $K$ to coincide with the holomorphic symplectic implosion for $G$, we will use their definition to compare our results. The universal implosion $(\T^* K)_{HKimpl}$ they obtain is a hyperK\"{a}hler space which is isomorphic to $[G\times^N \b]^{\aff}$ in one of the complex structures, where $\b$ is the Borel subalgebra of $\g=\Lie(G)$. Symplectic implosion for quasi-Hamiltonian spaces was defined in \cite{HJS} for a compact group and in \cite{DK} for $SL(n; \mathbb{C})$.

Since Hamiltonian spaces are interpreted as Lagrangians in the adjoint quotient, to implode a Hamiltonian $G$-space to a Hamiltonian $H$-space one has to compose the Lagrangian with a Lagrangian correspondence between $[\g/G]$ and $[\h/H]$. One famous such correspondence is the so-called Grothendieck--Springer simultaneous resolution
\[
\xymatrix{
& [\widetilde{\g}/G] \ar[dl] \ar[dr] & \\
[\g/G] && [\h/H].
}
\]
where $\widetilde{\g}$ is the moduli space of Borel subgroups $B\subset G$ together with an element in their Lie algebras.

In this paper we show that this correspondence is Lagrangian (Corollary \ref{cor:GScorrespondence}) and, moreover, that the composition of a Lagrangian in $[\g/G]$, a Hamiltonian $G$-space, gives the holomorphic symplectic implosion. More precisely, we compute the symplectic implosion of the universal space, $\T^*G$, and show that $(\T^*G)_{impl} = G\times^N \b$ (Proposition \ref{prop:univimplosion}) which we regard as a stack instead of an affine scheme.

The Grothendieck--Springer correspondence has generalizations to the group and elliptic cases. These are correspondences
\[
\xymatrix{
& \left[\frac{\widetilde{G}}{G}\right] \ar[dl] \ar[dr] & \\
\left[\frac{G}{G}\right] && \left[\frac{H}{H}\right]
}
\]
and
\[
\xymatrix{
& \Bun_B(E) \ar[dl] \ar[dr] & \\
\Bun_G(E) && \Bun_H(E).
}
\]

Another generalization is to parabolic subgroups $P\subset G$, which in the group case is a correspondence
\[
\xymatrix{
& \left[\frac{P}{P}\right] \ar[dl] \ar[dr] & \\
\left[\frac{G}{G}\right] && \left[\frac{M}{M}\right],
}
\]
where $M$ is the Levi factor of $P$.

We show (Corollary \ref{cor:multGScorrespondence}) that all these correspondences are Lagrangian, which allows one to perform symplectic implosion in the quasi-Hamiltonian and elliptic setting. The generalization of symplectic implosion to parabolics provides an interpolation between the original unreduced space in the case $P=G$ and the usual symplectic implosion in the case $P=B$.

Having established the definition of symplectic implosion in our setting, we are able to compute $H$-Hamiltonian reduction of the imploded space in terms of the $G$-Hamiltonian reduction of the original space. Let us quote the result (Theorem \ref{thm:reductionofimplosion}) in the Hamiltonian case.

\begin{theorm}
Let $X$ be a Hamiltonian $G$-space.

The $H$-Hamiltonian reduction of the symplectic implosion $X_{impl}$ at the zero moment map value is isomorphic to the $G$-Hamiltonian reduction of $X$ with respect to the Hamiltonian $G$-space $\T^*(G/B)$, the Springer resolution of the nilpotent cone.

The $H$-Hamiltonian reduction of $X_{impl}$ at a regular semisimple moment map value $\lambda\in \h$ is isomorphic to the $G$-Hamiltonian reduction of $X$ along the adjoint orbit of $\lambda$.
\end{theorm}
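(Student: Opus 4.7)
My plan is to express everything through Lagrangian composition of correspondences. By construction, $X_{impl}$ is the Lagrangian in $[\h/H]$ obtained by composing the Hamiltonian $G$-space $[X/G]\to[\g/G]$ with the Grothendieck--Springer correspondence (Lagrangian by Corollary~\ref{cor:GScorrespondence}), so $X_{impl}\cong [X/G]\times_{[\g/G]}[\widetilde{\g}/G]$. The $H$-Hamiltonian reduction at $\lambda\in\h$ is the further composition with the Lagrangian $[\pt/H]_\lambda\to[\h/H]$ sending the point to $\lambda$. By associativity of fiber products,
\[
X_{impl}//_\lambda H \;\cong\; [X/G]\times_{[\g/G]}\bigl([\widetilde{\g}/G]\times_{[\h/H]}[\pt/H]_\lambda\bigr),
\]
so the whole problem reduces to identifying the universal Lagrangian $F_\lambda:=[\widetilde{\g}/G]\times_{[\h/H]}[\pt/H]_\lambda$ in $[\g/G]$ in the two cases.

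To compute $F_\lambda$, I would rewrite $[\widetilde{\g}/G]\cong[\b/B]$, with map to $[\h/H]$ induced by $\b\to\b/\n=\h$ and $B\to B/N=H$. Unwinding the fiber product, an $S$-point of $F_\lambda$ is a $B$-torsor $P$ with a section $\tau$ of $P\times^B\b$ and an identification of $P/N$ with an $H$-torsor whose induced $\h$-valued section equals $\lambda$; after absorbing the $H$-torsor data, this becomes $F_\lambda\cong[(\lambda+\n)/B]$ with the restricted adjoint action. For $\lambda=0$ this is $[\n/B]\cong[\widetilde{\cN}/G]$ via the standard identification $\widetilde{\cN}=G\times^B\n=\T^*(G/B)$, the Springer resolution of the nilpotent cone, with moment map induced by $\n\hookrightarrow\g$. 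For $\lambda$ regular semisimple, $\mathrm{ad}(\lambda)\colon\n\to\n$ is invertible, so by integration (or by comparing dimensions on the étale orbit map $N\to\lambda+\n$) the unipotent group $N$ acts simply transitively on $\lambda+\n$, while $H$ fixes $\lambda$; hence $[(\lambda+\n)/B]\cong\B H$, and its induced map to $[\g/G]$ realizes it as the closed substack $[\cO_\lambda/G]\cong\B H$ associated to the adjoint orbit.

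Substituting gives $X_{impl}//_0 H\cong[X/G]\times_{[\g/G]}[\widetilde{\cN}/G]$ and $X_{impl}//_\lambda H\cong[X/G]\times_{[\g/G]}[\cO_\lambda/G]$, which by definition are the asserted $G$-Hamiltonian reductions. The routine case is $\lambda=0$; the main obstacle is the regular semisimple computation, where one must track stacky automorphisms carefully: a tempting but wrong simplification is to trivialize the $H$-torsor inside $F_\lambda$ and conclude $[(\lambda+\n)/N]\cong\pt$, which collapses precisely the $H$-gerbe structure needed to match the $G$-stabilizer of $\lambda$. Verifying that the automorphism group of $F_\lambda$ at the distinguished point equals $H$ on both sides provides the critical sanity check that $F_\lambda\to[\g/G]$ is indeed the adjoint-orbit inclusion.
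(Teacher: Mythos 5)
Your proposal is correct and follows essentially the same route as the paper: rewrite the $H$-reduction as $[X/G]\times_{[\g/G]}\bigl([\b/B]\times_{[\h/H]}[\pt/H]_\lambda\bigr)$ by associativity of fiber products, identify the universal Lagrangian as $[\n/B]\cong[\widetilde{\cN}/G]$ when $\lambda=0$, and as $[\pt/H]\cong[\cO_\lambda/G]$ when $\lambda$ is regular semisimple (the paper phrases the latter as every regular semisimple element of $\b$ being $B$-conjugate into $\h$, which is your simple-transitivity of $N$ on $\lambda+\n$ plus the stabilizer computation). Your extra care about the residual $H$-gerbe is a correct elaboration of the same argument rather than a different approach.
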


The discrepancy between the Hamiltonian reduction of the implosion and the original space can be explained by noting that the procedure of implosion is not invertible. However, every Lagrangian correspondence has an adjoint and we discuss in Section \ref{sect:symplexplosion} the procedure dual to symplectic implosion (which may perhaps be named ``symplectic explosion''): it is an operation that takes Hamiltonian $H$-spaces to Hamiltonian $G$-spaces using the Grothendieck--Springer correspondence read backwards. We show (Proposition \ref{prop:implosioninvert}) that the composition of the symplectic implosion and its dual is not the identity.

\subsection*{Structure of the paper}

The paper is organized as follows. In Section \ref{sect:symplecticgeometry} we recall the necessary material on derived symplectic geometry from \cite{PTVV}. In Section \ref{sect:hamiltonianreduction} we explain how Hamiltonian and quasi-Hamiltonian reductions can be presented as Lagrangian intersections which explains the symplectic structure. Finally, Section \ref{sect:symplecticimplosion} is devoted to symplectic implosion. First, we show that various versions of the Grothendieck--Springer correspondence are Lagrangian. Second, we show that the universal implosion in the Hamiltonian case coincides with the one obtained by \cite{DKS} in the context of hyperK\"{a}hler implosion if one interprets quotients as stacky quotients instead of affine quotients. We end with a discussion of an operation dual to symplectic implosion.

\subsection*{Acknowledgements}

The author would like to thank Frances Kirwan for useful comments and a seminar talk which prompted the writing of this paper and a referee for extensive comments which improved the exposition of the paper. This research was supported by the EPSRC grant EP/I033343/1.

\section{Derived symplectic geometry}

\label{sect:symplecticgeometry}

In this section we briefly recall the necessary basics of derived symplectic geometry. The reader is invited to consult \cite{PTVV} for details and precise statements. All stacks we consider will be derived Artin stacks locally of finite presentation. In particular, the cotangent complex $\bL_X$ of such a stack $X$ is perfect and we have its dual $\bT_X=\bL_X^*$, the tangent complex.

\subsection{Symplectic structures}

Pantev, To\"{e}n, Vaqui\'{e} and Vezzosi \cite{PTVV} define the de Rham algebra $\DR(X)$ of a derived stack $X$. It is a commutative dg algebra together with an extra \emph{weight} grading and a de Rham differential $\ddr$. Our convention is such that the de Rham differential $\ddr$ has degree $1$ and weight $1$.

As a plain graded commutative dg algebra, we can identify
\begin{equation}
\DR(X)\cong \Gamma(X, \Sym(\bL_X[-1]))
\label{eq:DRcdga}
\end{equation}
where the weight grading on the right-hand side is coming from the obvious grading on the symmetric algebra.

A $\d$-closed element of $\DR(X)$ of weight $p$ and degree $p+n$ is called a \emph{$p$-form of degree $n$} on $X$. A $(\d+\ddr)$-closed element of $\DR(X)$ of weight at least $p$ and degree $p+n$ is called a \emph{closed $p$-form of degree $n$} on $X$. Explicitly, a closed $p$-form of degree $n$ is a collection of elements $\omega_0,\omega_1,...$ of $\DR(X)$ where $\omega_i$ has weight $p+i$ and degree $p+n$ which satisfy the equations
\begin{align*}
\d\omega_0 &= 0\\
\ddr\omega_i + \d\omega_{i+1} &= 0.
\end{align*}
A way to think of these equations is as saying that $\omega_i$ is not strictly $\ddr$-closed, but only closed up to homotopy given by $\omega_{i+1}$. We denote by $\Omega^p(X, n)$ the complex of $p$-forms of degree $n$ and by $\Omega^{p, cl}(X, n)$ the complex of closed $p$-forms of degree $n$. Note that the latter complex has differential given by $\d+\ddr$.

\begin{example}
Let $X = [Y/G]$ be a quotient of a smooth scheme $Y$ by a reductive algebraic group $G$. Then we can identify
\[\DR(Y)\cong \Gamma(Y, \Sym(\Omega^1_Y[-1]))\]
and
\[\DR(X)\cong (\DR(Y)\otimes \Sym(\mathfrak{g}^*[-2]))^G\]
where $\Gamma(Y, -)$ refers to the derived space of global sections.

The weight grading on $\DR(X)$ is given by placing $\mathfrak{g}^*$ in weight 1. The de Rham differential $\ddr$ on $\DR(X)$ is coming from the standard de Rham differential on $\DR(Y)$. The internal differential $\d$ on $\DR(X)$ is the sum of the internal differential on $\DR(Y)$ and the Cartan differential given on one-forms by $\alpha\mapsto -\iota_{a(-)} \alpha$ where $a\colon \mathfrak{g}\rightarrow \Gamma(Y, \T_Y)$ is the infinitesimal action map.
\label{ex:DRquotient}
\end{example}

Suppose $\omega_0$ is a two-form of degree $n$. Then contraction with vector fields gives a morphism
\[\omega_0\colon \bT_X\rightarrow \bL_X[n].\]
We say that $\omega=\omega_0+\omega_1+...$ is \textit{non-degenerate} if $\omega_0$ defines a quasi-isomorphism.

\begin{defn}
An \textit{$n$-shifted symplectic structure} on a derived stack $X$ is a closed non-degenerate two-form $\omega$ of degree $n$.
\end{defn}

We will encounter the following two basic examples of symplectic stacks.

\begin{example}
Suppose $X$ is an derived Artin stack. Then we can define the shifted cotangent stack $\T^*[n]X$ as
\[\T^*[n]X = \Spec_{\cO_X} \Sym(\bT_X[-n]).\]

In this setting one can define a Liouville one-form $\lambda$ of degree $n$ and a closed two-form $\omega = \ddr\lambda$ of the same degree $n$. A local calculation (\cite[Proposition 1.21]{PTVV}) shows that $\omega$ is an $n$-shifted symplectic structure if $X$ is a derived Deligne--Mumford stack.
\end{example}

\begin{example}
Let $G$ be an affine algebraic group. One has an equivalence of symmetric monoidal dg-categories $\QCoh(\B G)\cong \Rep G$. Under this equivalence the cotangent complex $\bL_{\B G}$ corresponds to the coadjoint representation $\mathfrak{g}^*[-1]$ placed in degree 1. Therefore, by \eqref{eq:DRcdga} one has an equivalence of graded complexes
\[\DR(\B G)\cong \Gamma(\B G, \Sym(\mathfrak{g}^*[-2]))\cong \C^\bullet(G, \Sym(\mathfrak{g}^*[-2]))\]
where $\C^\bullet(G, -)$ refers to the group cohomology cochains.

By degree reasons the de Rham differential annihilates elements of \[\Sym(\mathfrak{g}^*[-2])^G\subset \DR(\B G).\] Therefore, the space of two-forms of degree 2 on $\B G$ is equivalent to the space of closed two-forms of degree 2 which is equivalent to the set $\Sym^2(\mathfrak{g}^*)^G$.

A two-form corresponding to $c_G\in\Sym^2(\mathfrak{g}^*)^G$ is non-degenerate iff the induced map $\mathfrak{g}\rightarrow \mathfrak{g}^*$ is an isomorphism. Therefore, in this case we get a 2-shifted symplectic structure on $\B G$ (see \cite[Section 1.2]{PTVV}).
\label{ex:BGsymplectic}
\end{example}

\subsection{Lagrangians}

In ordinary symplectic geometry we say that a submanifold $L\subset X$ of a symplectic manifold is isotropic if the symplectic form restricts to zero on $L$. Since we are working in the homotopical context, the form might restrict to zero only up to homotopy.

Let $X$ be an $n$-shifted symplectic stack.

\begin{defn}
An \textit{isotropic structure} on a morphism $f\colon L\rightarrow X$ is a homotopy from $f^*\omega_X$ to $0$ in $\Omega^{2, cl}(L, n)$.
\end{defn}

Thus, an isotropic structure on a morphism $f\colon L\rightarrow X$ (not necessarily an embedding) is a collection of differential forms $h=h_0 + h_1+...$ satisfying
\[f^*\omega = (\d + \ddr) h.\]

Unpacking this definition, we see that we must have
\begin{align*}
f^*\omega_0 &= \d h_0 \\
f^*\omega_i &= \ddr h_i + \d h_{i+1}.
\end{align*}

Since $h_0$ is not $\d$-closed, it does not define a morphism of complexes $\bT_L\rightarrow \bL_L[n-1]$. Instead, consider the morphism $f^*\bT_X\rightarrow \bL_L[n]$ defined as the composition
\[f^*\bT_X\stackrel{\omega_0}\rightarrow f^*\bL_X[n]\rightarrow \bL_L[n].\]

Then $h_0$ is the null-homotopy of the composite
\[\bT_L\rightarrow f^*\bT_X\rightarrow \bL_L[n].\]

If we denote by $\bT_{L/X}$ the homotopy fiber of $\bT_L\rightarrow f^*\bT_X$, then we get a morphism of complexes $\bT_{L/X}\rightarrow \bL_L[n-1]$.

\begin{defn}
The isotropic morphism $f\colon L\rightarrow X$ is \textit{Lagrangian} if the induced morphism $\bT_{L/X}\rightarrow \bL_L[n-1]$ is a quasi-isomorphism.
\end{defn}

\begin{example}
The point $\pt$ has a unique $n$-shifted symplectic structure for any $n$. An isotropic structure on a projection $X\rightarrow \pt$ is then a closed two-form of degree $(n-1)$. It is Lagrangian iff the two-form is symplectic.
\end{example}

We have the following important theorem about Lagrangian intersections \cite[Theorem 2.9]{PTVV}.

\begin{thm}
\label{thm:lagrinter}
Suppose $L_1,L_2\rightarrow X$ are two Lagrangians into an $n$-shifted symplectic stack. Then their intersection $L_1\times_X L_2$ carries an $(n-1)$-shifted symplectic structure.
\end{thm}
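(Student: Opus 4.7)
The plan is to construct a closed $2$-form of degree $n-1$ on $M := L_1\times_X L_2$ and then verify non-degeneracy by comparing two fiber sequences on $M$.

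First I would produce the form. Let $p_i\colon M\to L_i$ be the projections and $\pi\colon M\to X$ the canonical composite. The pulled-back isotropic structures $p_1^*h_{L_1}$ and $p_2^*h_{L_2}$ provide two null-homotopies of $\pi^*\omega$ in $\Omega^{2, cl}(M, n)$, and their difference is a loop at $0$ in this space. Such a loop is the same datum as an element $\omega_M\in \Omega^{2, cl}(M, n-1)$, the candidate shifted symplectic form.

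Next I would verify non-degeneracy. Since $M=L_1\times_X L_2$, base change gives $\bT_{M/L_1}\simeq p_2^*\bT_{L_2/X}$ and dually $\bL_{M/L_1}\simeq p_2^*\bL_{L_2/X}$. Rotating the tangent and cotangent fiber sequences associated to $p_1\colon M\to L_1$ yields
\[p_1^*\bT_{L_1/X} \to \bT_M \to p_2^*\bT_{L_2}\]
and
\[p_1^*\bL_{L_1}[n-1] \to \bL_M[n-1] \to p_2^*\bL_{L_2/X}[n-1].\]
The Lagrangian structure on $L_1$ gives a quasi-isomorphism $\bT_{L_1/X}\simeq \bL_{L_1}[n-1]$, matching the first terms of the two sequences. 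The Lagrangian structure on $L_2$ gives $\bT_{L_2/X}\simeq \bL_{L_2}[n-1]$, whose dual yields $\bL_{L_2/X}[n-1]\simeq \bT_{L_2}$, matching the third terms. I would then show that the contraction map $\bT_M\to \bL_M[n-1]$ coming from $\omega_M$ fills in to a morphism of these fiber sequences, so that the five-lemma delivers the required quasi-isomorphism.

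The main obstacle is this last compatibility: one must verify that the contraction map $\bT_M\to \bL_M[n-1]$ produced by $\omega_M$ really intertwines the identifications coming from $\omega_0$ on $X$ and the isotropic structures $h_{L_1}, h_{L_2}$. This amounts to an explicit computation tracking how the loop $p_1^*h_{L_1}-p_2^*h_{L_2}$ contracts with tangent vectors in the fiber-product description of $\bT_M$. Once this compatibility is established, the conclusion is formal.
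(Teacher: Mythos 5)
Your proposal is correct and follows exactly the route the paper takes: the paper constructs $\omega_M$ as the difference of the two pulled-back null-homotopies (a loop at $0$ in $\Omega^{2,cl}(M,n)$, hence a closed two-form of degree $n-1$) and defers the non-degeneracy check to \cite[Theorem 2.9]{PTVV}, whose argument is precisely your comparison of the tangent and cotangent fiber sequences via the two Lagrangian quasi-isomorphisms. The compatibility you flag as the remaining obstacle is indeed the only substantive computation, and it is carried out in the cited reference just as you describe.
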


The symplectic structure is constructed from the following observation: both Lagrangians carry a trivialization of the pullback of the symplectic structure on $X$. Therefore, their intersection carries two such trivializations and their difference defines an actual closed two-form.

We will need a slight generalization of this theorem (\cite[Theorem 1.2]{Saf} and \cite[Theorem 4.4]{Cal}). Given a symplectic stack $X$, we denote by $\overline{X}$ the same stack with the opposite symplectic structure.

\begin{defn}
A correspondence
\[
\xymatrix{
& L \ar[dl] \ar[dr] & \\
X && Y
}
\]
between $n$-shifted symplectic stacks $X$ and $Y$ is \textit{Lagrangian} if the morphism $L\rightarrow \overline{X}\times Y$ is Lagrangian.
\end{defn}

\begin{thm}
\label{thm:lagrcomp}
Suppose $X\leftarrow C\rightarrow Y$ is a Lagrangian correspondence and $L\rightarrow Y$ is a Lagrangian, where $X$ and $Y$ are $n$-shifted symplectic stacks. Then the intersection $C\times_Y L\rightarrow X$ is Lagrangian.
\end{thm}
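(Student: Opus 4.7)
The plan is to follow the same two-step strategy as in Theorem \ref{thm:lagrinter}, just with more bookkeeping. First, I would construct an explicit isotropic structure on $Z := C \times_Y L \to X$ as the difference of the two pulled-back isotropic data, and then verify non-degeneracy via a fiber-sequence analysis of the tangent and cotangent complexes of the pullback square. Throughout, write $p\colon Z \to C$ and $q\colon Z \to L$ for the projections, denote the structure morphism $C \to \overline{X} \times Y$ by $(\pi_X, \pi_Y)$, and the structure morphism $L \to Y$ by $\pi$.

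The Lagrangian structure on $C$ is a homotopy $h_C \in \Omega^{2,cl}(C, n)$ trivializing $\pi_Y^*\omega_Y - \pi_X^*\omega_X$, while the Lagrangian structure on $L$ is a homotopy $h_L$ trivializing $\pi^*\omega_Y$. Because $\pi_Y\circ p \simeq \pi\circ q$ in the homotopy pullback $Z$, the combination $q^*h_L - p^*h_C$ is a closed-form null-homotopy of the pullback of $\omega_X$ along $Z \to C \to X$, giving the candidate isotropic structure for $Z \to X$.

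For non-degeneracy, I need the induced map $\bT_{Z/X} \to \bL_Z[n-1]$ to be a quasi-isomorphism. Base change for the homotopy pullback gives $\bT_{Z/C} \simeq q^*\bT_{L/Y} \simeq q^*\bL_L[n-1]$ using the Lagrangian property of $L$. Factoring $C \to X$ through $\overline{X} \times Y$, the relative tangent sequence $\bT_{C/\overline{X}\times Y} \to \bT_{C/X} \to \pi_Y^*\bT_Y$ identifies with $\bL_C[n-1] \to \bT_{C/X} \to \pi_Y^*\bL_Y[n]$ by the Lagrangian property of $C$ and the symplectic structure on $Y$. Stacking with $\bT_{Z/C} \to \bT_{Z/X} \to p^*\bT_{C/X}$ one obtains a three-step filtration of $\bT_{Z/X}$ whose associated graded consists of $q^*\bL_L[n-1]$, $p^*\bL_C[n-1]$ and $q^*\pi^*\bL_Y[n]$. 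Dually, the cotangent-complex fiber sequence $q^*\pi^*\bL_Y \to p^*\bL_C \oplus q^*\bL_L \to \bL_Z$ of the pullback square, shifted by $n-1$, gives $\bL_Z[n-1]$ a matching two-step filtration with the same associated graded pieces.

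The main obstacle is to check that the comparison map actually induced by the explicit homotopy $q^*h_L - p^*h_C$ is a quasi-isomorphism, rather than merely that the two filtered complexes have abstractly isomorphic associated gradeds. Concretely, one must trace through the isotropic data and verify that on each graded piece the comparison reduces to the Lagrangian identifications $\bT_{L/Y} \simeq \bL_L[n-1]$ and $\bT_{C/\overline{X}\times Y} \simeq \bL_C[n-1]$ together with the symplectic identification $\bT_Y \simeq \bL_Y[n]$. This is a careful but essentially formal diagram chase, carried out in detail in \cite[Theorem~1.2]{Saf} and \cite[Theorem~4.4]{Cal}, to which I refer for the full verification.
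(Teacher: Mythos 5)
The paper gives no proof of this theorem, quoting it directly from \cite[Theorem 1.2]{Saf} and \cite[Theorem 4.4]{Cal}, so there is no internal argument to compare against; your sketch accurately outlines the proof from those references, with the correct isotropic structure $q^*h_L - p^*h_C$ and the correct matching filtrations on $\bT_{Z/X}$ and $\bL_Z[n-1]$ with graded pieces $q^*\bL_L[n-1]$, $p^*\bL_C[n-1]$ and $q^*\pi^*\bL_Y[n]$. Since you defer the final compatibility check to the same two sources the paper cites, your proposal is consistent with, and strictly more detailed than, what the paper itself provides.
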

The previous theorem can be recovered if we let $X=\pt$ with its canonical $n$-shifted symplectic structure.

\subsection{Examples of Lagrangians}

Let us provide some further tools to construct Lagrangians.

Recall the following classical construction. Let $Z\rightarrow X$ be an embedding of smooth manifolds. Then it is well-known that the correspondence
\[
\xymatrix{
& \T^*X\times_X Z\ar[dl] \ar[dr] & \\
\T^*X && \T^*Z
}
\]
is Lagrangian. The map on the left is the obvious projection and the map on the right is given by the pullback of differential forms. Let us prove an immediate generalization of this construction to shifted cotangent stacks.

\begin{prop}
Let $f\colon Z\rightarrow X$ be a morphism of derived Deligne--Mumford stacks. Then the correspondence
\[
\xymatrix{
& \T^*[n]X \times_X Z \ar[dl] \ar[dr] & \\
\T^*[n]X && \T^*[n]Z
}
\]
is Lagrangian.
\end{prop}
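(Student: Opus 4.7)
Write $\pi_X \colon \T^*[n]X \to X$ and $\pi_Z \colon \T^*[n]Z \to Z$ for the structure maps and $\lambda_X, \lambda_Z$ for the Liouville $1$-forms of degree $n$, so that $\omega_X = \ddr \lambda_X$ and $\omega_Z = \ddr \lambda_Z$. Set $L = \T^*[n]X \times_X Z$ with projections $p_X \colon L \to \T^*[n]X$ and $\pi_L \colon L \to Z$. The right leg of the correspondence is the map $q_Z \colon L \to \T^*[n]Z$ sending a point $(\alpha, z)$ with $\alpha \in \bL_X[n]|_{f(z)}$ to $(z, f^*\alpha)$, where $f^* = df^\vee$ is the pullback of covectors.

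The first task is to produce an isotropic structure. Using the tautological identity $\pi_X \circ p_X = f \circ \pi_L = \pi_Z \circ q_Z$, I would observe that on any tangent vector $v \in \bT_L$ both $p_X^*\lambda_X$ and $q_Z^*\lambda_Z$ evaluate to $\alpha\bigl(df \cdot d\pi_L(v)\bigr)$ -- this is the chain rule combined with the tautological nature of the Liouville form. Hence $p_X^*\lambda_X = q_Z^*\lambda_Z$ strictly in $\Omega^1(L,n)$, and applying $\ddr$ yields $p_X^*\omega_X = q_Z^*\omega_Z$. Consequently the pullback of the symplectic form on $\overline{\T^*[n]X}\times \T^*[n]Z$ along $L \to \overline{\T^*[n]X}\times \T^*[n]Z$ vanishes on the nose, and I would take the trivial (identically zero) isotropic structure.

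The second, more delicate step is non-degeneracy. At $p = (\alpha, z)\in L$ set $T = \bT_Z|_z$ and $U = \bT_X|_{f(z)}$. The Cartesian square defining $L$ gives a fiber sequence $U^\vee[n] \to \bT_L|_p \to T$, while the target's tangent complex fits in $U^\vee[n]\oplus T^\vee[n] \to \bT_{\overline{\T^*[n]X}\times \T^*[n]Z} \to U\oplus T$. Direct inspection identifies the induced differential: on the vertical summand $U^\vee[n]$ it is $(\id, df^\vee)$, and on the horizontal quotient $T$ it is $(df, \id)$. Taking homotopy fibers termwise yields a fiber sequence $T^\vee[n-1] \to \bT_{L/\overline{\T^*[n]X}\times \T^*[n]Z}|_p \to U[-1]$, while dualizing the sequence for $\bT_L$ and shifting by $n-1$ yields the \emph{same} fiber sequence for $\bL_L[n-1]$.

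The main obstacle is to identify the map $\bT_{L/\overline{\T^*[n]X}\times \T^*[n]Z} \to \bL_L[n-1]$ supplied by the Lagrangian structure with this evident comparison of fiber sequences. Because the chosen isotropic structure is strictly zero, the induced map is determined entirely by the canonical null-homotopy of the composite $\bT_{L/\overline{\T^*[n]X}\times \T^*[n]Z} \to \bT_L \to f^*\bT_{\overline{\T^*[n]X}\times \T^*[n]Z}$ coming from the fiber structure, post-composed with the symplectic pairing $\omega$. Unwinding this -- which locally reduces to the classical fact that the fiber of a shifted cotangent bundle over a point is Lagrangian -- one recovers, up to sign, the identity on each associated graded piece of the two fiber sequences, hence a quasi-isomorphism.
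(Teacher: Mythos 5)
Your strategy is the same as the paper's: produce the isotropic structure by comparing the two pulled-back Liouville forms, then check non-degeneracy by comparing the fiber sequences that $\bT_L$, the tangent complexes of the two shifted cotangent stacks, and $\bL_L[n-1]$ inherit from the bundle structure over $Z$. Your bookkeeping of those sequences (with your $U=\bT_X|_{f(z)}$ and $T=\bT_Z|_z$, the maps $(\id, df^\vee)$ and $(df,\id)$, and the resulting extensions of $U[-1]$ by $T^\vee[n-1]$ on both sides) matches the large commutative diagram the paper assembles. But two steps are asserted exactly where the content lies. First, the claim that $p_X^*\lambda_X = q_Z^*\lambda_Z$ holds \emph{strictly}, argued by evaluating on tangent vectors at a point $(\alpha,z)$, is not available for derived stacks: there are no points and covectors to chase, and the Liouville form is defined by adjunction from $\cO_Z\rightarrow \bL_Z[n]\otimes\bT_Z[-n]$. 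What must be produced is a specified homotopy between the two pullbacks; the paper obtains it from the naturality square relating $\coev_{\bT_Z[-n]}$ and $\coev_{f^*\bT_X[-n]}$ applied to the morphism $\bT_Z[-n]\rightarrow f^*\bT_X[-n]$. This is your ``chain rule,'' but it is a datum, not a vacuous equality, and it is this datum that induces the map $\bT_{L/\overline{\T^*[n]X}\times\T^*[n]Z}\rightarrow\bL_L[n-1]$ you then have to analyze.

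Second, your closing sentence --- that unwinding the induced map recovers the identity on associated graded pieces --- is precisely the nontrivial verification, and it needs an input you never supply: that $\omega_X$ fits into a morphism of fiber sequences from $p_X^*\bL_X[n]\rightarrow\bT_{\T^*[n]X}\rightarrow p_X^*\bT_X$ to $p_X^*\bL_X[n]\rightarrow\bL_{\T^*[n]X}[n]\rightarrow p_X^*\bT_X$ inducing the identity on the outer terms (and likewise for $\omega_Z$, with the sign from $\overline{\T^*[n]X}$ tracked). The paper establishes this by reducing \'etale-locally to semi-free models and invoking the coordinate description of the shifted cotangent symplectic form from \cite[Proposition 1.21]{PTVV}, then concludes with a two-out-of-three argument on the columns of the resulting diagram of fiber sequences. ``Locally reduces to the classical fact that the fiber of a shifted cotangent bundle is Lagrangian'' is a restatement of what is to be proved, not a proof of it; once you supply the local computation, your argument closes up and coincides with the paper's.
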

\begin{proof}
Let us recall the construction of the symplectic structure on the shifted cotangent stack $\T^*[n] Z$.

Consider the morphism
\[\cO_Z\rightarrow \bL_Z[n]\otimes \bT_Z[-n]\hookrightarrow \bL_Z[n]\otimes \Sym(\bT_Z[-n]) \cong (p_Z)_* p_Z^* (\bL_Z[n])\rightarrow (p_Z)_* \bL_{\T^*[n]Z}[n],\]
where $p_Z\colon \T^*[n]Z\rightarrow Z$ is the projection. By adjunction it gives a morphism
\[\lambda_Z\colon \cO_{\T^*[n]Z}\rightarrow \bL_{\T^*[n]Z}[n],\]
i.e. a degree $n$ one-form on $\T^*[n]Z$ known as the Liouville one-form. The symplectic structure on $\T^*[n]Z$ is defined to be $\omega_Z=\ddr\lambda_Z$.

Let $L=\T^*[n]X\times_X Z\cong \Spec \Sym_{\cO_Z}(f^*\bT_X[-n])$ and denote the maps $L\rightarrow \T^*[n] X$ and $L\rightarrow\T^*[n] Z$ by $g_X$ and $g_Z$ respectively. Let $\lambda_X$ and $\lambda_Z$ be the Liouville one-forms on $\T^*[n]X$ and $\T^*[n]Z$. The pullbacks $g_X^*\lambda_X$ and $g_Z^*\lambda_Z$ are adjoint to
\[\cO_Z\rightarrow f^*\bL_X[n]\otimes f^*\bT_X[-n]\hookrightarrow f^*\bL_X[n]\otimes \Sym(f^*\bT_X[-n])\rightarrow \bL_Z[n]\otimes \Sym(f^*\bT_X[-n])\]
and
\[\cO_Z\rightarrow \bL_Z[n]\otimes \bT_Z[-n]\hookrightarrow \bL_Z[n]\otimes \Sym(\bT_Z[-n])\rightarrow \bL_Z[n]\otimes \Sym(f^*\bT_X[-n])\]
respectively.

For any two dualizable objects $V,W$ of a symmetric monoidal 1-category $\cC$ with duality data ($\ev$,$\coev$) and a morphism $F\colon V\rightarrow W$ the diagram
\[
\xymatrix{
\textbf{1} \ar^{\coev_V}[r] \ar_{\coev_W}[d] & V^*\otimes V \ar^{\id\otimes F}[d] \\
W^*\otimes W \ar^{F^*\otimes\id}[r] & V^*\otimes W
}
\]
is commutative. Applying this to $\cC = \QCoh(Z)$, $V=\bT_Z[-n]$, $W=f^* \bT_X[-n]$ and $F\colon \bT_Z[-n]\rightarrow f^*\bT_X[-n]$ the pushforward morphism we deduce that $g_X^*\lambda_X$ and $g_Z^*\lambda_Z$ are homotopic. This gives the isotropic structure on $L\rightarrow \overline{\T^*[n]X}\times \T^*[n]Z$.

To show that it is Lagrangian, we have to check that the sequence
\begin{equation}
\label{eq:lagrangianseq}
\bT_L\rightarrow g_X^*\bT_{\T^*[n]X}\oplus g_Z^*\bT_{\T^*[n]Z} \rightarrow \bL_L[n]
\end{equation}
is a fiber sequence.

We denote the projection $L\rightarrow Z$ by $\pi$. Then we have a fiber sequence
\begin{equation}
\label{eq:fiberseq1}
\bT_{L/Z}\rightarrow \bT_L\rightarrow \pi^* \bT_Z
\end{equation}
where $\bT_{L/Z}\cong \pi^* f^*\bL_X[n]$ by the definition of $L$ as the relative spectrum of $\Sym_{\cO_Z}(f^*\bT_X[-n])$.

Pulling back similar fiber sequences for $\T^*[n]X$ and $\T^*[n]Z$ to $L$ we obtain fiber sequences
\begin{equation}
\label{eq:fiberseq2}
\xymatrix{
\pi^*f^*\bL_X[n] \ar[r] & g_X^*\bT_{\T^*[n]X} \ar[r] & \pi^*f^*\bT_X \\
\pi^*\bL_Z[n] \ar[r] & g_Z^*\bT_{\T^*[n]Z} \ar[r] & \pi^*\bT_Z.
}
\end{equation}

To show that \eqref{eq:lagrangianseq} is a fiber sequence it is enough to work \'{e}tale-locally on $X$ and $Z$, so we may assume that both $X$ and $Z$ are given by spectra of semi-free commutative dg algebras. Explicit computations in coordinates of the symplectic structures $\omega_X$ and $\omega_Z$ given in the proof of \cite[Proposition 1.21]{PTVV} then show that $\omega_X$ fits into a commutative diagram
\begin{equation}
\label{eq:cotangentsymplectic}
\xymatrix{
p_X^* \bL_X[n] \ar^{\id}[d] \ar[r] & \bT_{\T^*[n]X} \ar^{\omega_X}[d] \ar[r] & p_X^*\bT_X \ar^{\id}[d] \\
p_X^* \bL_X[n] \ar[r] & \bL_{\T^*[n]X}[n] \ar[r] & p_X^*\bT_X
}
\end{equation}
and similarly for $\omega_Z$.

Combining \eqref{eq:cotangentsymplectic} with fiber sequences \eqref{eq:fiberseq1} and \eqref{eq:fiberseq2} we obtain a commuttive diagram
\[
\xymatrix{
\pi^*f^*\bL_X[n] \ar^{\id\oplus f^*}[d] \ar[r] & \bT_L \ar[r] \ar[d] & \pi^* \bT_Z \ar^{f_*\oplus \id}[d] \\
\pi^*f^*\bL_X[n]\oplus \pi^*\bL_Z[n] \ar^{-\id\oplus \id}[d] \ar[r] & g_X^*\bT_{\T^*[n]X}\oplus g_Z^*\bT_{\T^*[n]Z} \ar^{-\omega_X\oplus\omega_Z}[d] \ar[r] & \pi^*f^*\bT_X\oplus \pi^*\bT_Z \ar^{-\id\oplus\id}[d] \\
\pi^*f^*\bL_X[n]\oplus \pi^*\bL_Z[n] \ar^{f^*\oplus \id}[d] \ar[r] & g_X^*\bL_{\T^*[n]X}[n]\oplus g_Z^*\bL_{\T^*[n]Z}[n] \ar[d]\ar[r] & \pi^*f^*\bT_X \oplus \pi^*\bT_Z \ar^{\id\oplus f_*}[d] \\
\pi^*\bL_Z[n] \ar[r] & \bL_L[n] \ar[r] & \pi^*f^*\bT_X
}
\]
where each row is a fiber sequence. Observe that in the outer columns the first, second and fourth terms form a fiber sequence. Therefore, the corresponding terms in the middle column also form a fiber sequence which shows that \eqref{eq:lagrangianseq} is a fiber sequence.
\end{proof}

This theorem gives a large family of examples of derived Lagrangians. Given a morphism $f\colon Z\rightarrow X$, the normal sheaf $N_{Z/X}$ is defined to be the cofiber of the map $\bT_Z\rightarrow f^* \bT_X$. We define the $n$-shifted conormal bundle $\N^*[n](Z/X)$ to be
\[\N^*[n](Z/X) = \Spec\Sym_{\cO_Z} (N_{Z/X}[-n]).\]

The morphism $f^*\bT_X\rightarrow N_{Z/X}$ on the level of sheaves induces a morphism of stacks
\[\N^*[n](Z/X)\rightarrow \T^*[n]X.\]

\begin{cor}
\label{cor:conormal}
Let $f\colon Z\rightarrow X$ be a morphism of derived Deligne--Mumford stacks. Then the morphism
\[\N^*[n](Z/X)\rightarrow \T^*[n]X\]
from the shifted conormal bundle of $Z$ inside $X$ is Lagrangian.
\end{cor}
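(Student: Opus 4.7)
The plan is to exhibit $\N^*[n](Z/X)$ as a derived intersection obtained from the Lagrangian correspondence of the preceding Proposition and a standard Lagrangian, and then invoke Theorem~\ref{thm:lagrcomp}.

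First I would identify the shifted conormal bundle as a fiber product. The zero section of the shifted cotangent gives a map $s_Z\colon Z\rightarrow \T^*[n]Z$ corresponding to the augmentation $\Sym(\bT_Z[-n])\rightarrow \cO_Z$. On the level of quasicoherent algebras the pullback of $\T^*[n]X\times_X Z\cong \Spec_{\cO_Z}\Sym(f^*\bT_X[-n])$ along $s_Z$ is
\[
\Sym_{\cO_Z}(f^*\bT_X[-n])\otimes_{\Sym_{\cO_Z}(\bT_Z[-n])} \cO_Z\cong \Sym_{\cO_Z}(\mathrm{cofib}(\bT_Z\rightarrow f^*\bT_X)[-n])=\Sym_{\cO_Z}(N_{Z/X}[-n]),
\]
so that there is a natural equivalence
\[
\N^*[n](Z/X)\cong (\T^*[n]X\times_X Z)\times_{\T^*[n]Z} Z,
\]
compatible with the projection to $\T^*[n]X$.

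Next I would check that the zero section $s_Z\colon Z\rightarrow \T^*[n]Z$ is Lagrangian. The pullback of the Liouville one-form $\lambda_Z$ along $s_Z$ vanishes tautologically (the section comes from the augmentation), so $s_Z^*\omega_Z=\ddr s_Z^*\lambda_Z$ admits a canonical null-homotopy. Non-degeneracy amounts to showing that $\bT_{Z/\T^*[n]Z}\rightarrow \bL_Z[n-1]$ is a quasi-isomorphism, which follows from the identification $\bT_{Z/\T^*[n]Z}\cong \bL_Z[n-1]$ coming from the fiber sequence $\bL_Z[n]\rightarrow s_Z^*\bT_{\T^*[n]Z}\rightarrow \bT_Z$ of the preceding Proposition.

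Finally, I would apply Theorem~\ref{thm:lagrcomp} to the Lagrangian correspondence
\[
\T^*[n]X\longleftarrow \T^*[n]X\times_X Z\longrightarrow \T^*[n]Z
\]
from the previous Proposition and to the Lagrangian $s_Z\colon Z\rightarrow \T^*[n]Z$. The resulting composite
\[
\N^*[n](Z/X)\cong (\T^*[n]X\times_X Z)\times_{\T^*[n]Z} Z\longrightarrow \T^*[n]X
\]
is then Lagrangian. Essentially nothing in this argument is an obstacle; the only point requiring care is the identification of the derived fiber product with the shifted conormal, for which the key input is the formula $N_{Z/X}=\mathrm{cofib}(\bT_Z\rightarrow f^*\bT_X)$ and the behavior of $\Sym$ under cofiber sequences.
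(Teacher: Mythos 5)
Your proposal is correct and follows essentially the same route as the paper: the identification $\N^*[n](Z/X)\cong(\T^*[n]X\times_X Z)\times_{\T^*[n]Z}Z$ via the tensor-product computation of $\Sym$ algebras, followed by composing the zero section $Z\rightarrow\T^*[n]Z$ with the Lagrangian correspondence of the preceding Proposition using Theorem~\ref{thm:lagrcomp}. The only difference is that you explicitly verify the zero section is Lagrangian (vanishing of the pulled-back Liouville form plus the non-degeneracy check), a point the paper's proof leaves implicit; that is a welcome bit of extra care, not a divergence in method.
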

\begin{proof}
We have a sequence of equivalences
\begin{align*}
(\T^*[n] X\times_X Z)\times_{\T^*[n] Z} Z &\cong \Spec \Sym_{\cO_Z} (f^*\bT_X[-n]) \times_{\Spec \Sym_{\cO_Z}(\bT_Z[-n])} \Spec \cO_Z \\
& \cong \Spec\left( \Sym_{\cO_Z} (f^*\bT_X[-n]) \otimes_{\Sym_{\cO_Z}(\bT_Z[-n])} \cO_Z\right) \\
& \cong \Spec \Sym_{\cO_Z}(N_{Z/X}[-n]) \\
& = \N^*[n](Z/X).
\end{align*}

Therefore, $\N^*[n](Z/X)\rightarrow \T^*[n]X$ can be obtained as a composition of
the zero section $Z\rightarrow \T^*[n]Z$ and the Lagrangian correspondence
\[
\xymatrix{
& \T^*[n]X \times_X Z \ar[dl] \ar[dr] & \\
\T^*[n]X && \T^*[n]Z
}
\]

By Theorem \ref{thm:lagrcomp} this implies that the morphism itself is Lagrangian.
\end{proof}

\begin{remark}
Both the theorem and the corollary remain true for derived Artin stacks if one replaces Lagrangian structures with isotropic structures. D. Calaque has recently obtained a proof that these isotropic structures are Lagrangian even in the case of derived Artin stacks.
\end{remark}

\section{Hamiltonian reduction}

\label{sect:hamiltonianreduction}

Let us present Hamiltonian and quasi-Hamiltonian reductions from the point of view of Lagrangian intersections. The details can be found in \cite{Cal} and \cite{Saf}.

\subsection{Ordinary Hamiltonian reduction}

\label{sect:hamreduction}

The stack $X=[\g^*/G]\cong \T^*[1] (\B G)$ has a 1-shifted symplectic structure which we are going to write down explicitly. The category of quasi-coherent sheaves on $X$ is equivalent to $G$-equivariant quasi-coherent sheaves on $\mathfrak{g}^*$. Under this equivalence the cotangent complex of $X$ is
\[\bL_X\cong (\g\otimes \cO_{\g^*}\rightarrow \g^*\otimes \cO_{\g^*})\]
in degrees 0 and 1 with the differential given by the coadjoint action.

From Example \ref{ex:DRquotient} we have that
\[\DR(X)\cong (\DR(\mathfrak{g}^*)\otimes \Sym(\mathfrak{g}^*[-2]))^G.\]
The Liouville one-form $\lambda_{[\g^*/G]}$ on $[\g^*/G]$ is given by the identity function $\g^*\rightarrow \g^*$ viewed as an element of $(\cO_{\g^*}\otimes \mathfrak{g}^*)^G\subset \DR(X)$ of weight $1$ and degree $2$.

We define $\omega_{[\g^*/G]} = \ddr\lambda_{[\g^*/G]}$. It is a closed two-form of degree 1 by construction. The element $\omega\in(\bL_{\g^*}\otimes \g^*)^G$ can be described as follows. Given a tangent vector to $\g^*$ at some point, $\omega$ regards it as an element of $\g^*$ using the vector space structure on $\g^*$. The symplectic structure $\omega_{[\g^*/G]}$ induces an isomorphism
\[
\xymatrix{
\g\otimes \cO_{\g^*} \ar[r] \ar^{\id}[d] & \g^*\otimes \cO_{\g^*} \ar^{\id}[d] \\
\g\otimes \cO_{\g^*}\ar[r] & \g^*\otimes \cO_{\g^*}
}
\]

Suppose we have a map $\mu\colon M\rightarrow \g^*$ from a smooth scheme $M$. One might wonder when the induced map $\mu\colon [M/G]\rightarrow [\g^*/G]$ on the quotients is isotropic or Lagrangian.

Let us recall that a Hamiltonian $G$-space $M$ is the following collection of data:
\begin{itemize}
\item A smooth 0-shifted symplectic scheme $(M,\omega)$,

\item A $G$-action on $M$ preserving the symplectic structure,

\item A $G$-equivariant moment map $\mu\colon M\rightarrow \mathfrak{g}^*$ satisfying
\[\ddr \mu(v) = \iota_{a(v)}\omega\]
for all $v\in\mathfrak{g}$ where $a\colon \mathfrak{g}\rightarrow \Gamma(M, \T_M)$ is the infinitesimal action map.
\end{itemize}

The following theorem was proved in \cite[Section 2.2.1]{Cal} and \cite[Section 2.2]{Saf}:
\begin{thm}
Let $M$ be a smooth scheme with a $G$-action. Then the data of a Hamiltonian $G$-space on $\mu\colon M\rightarrow \g^*$ is equivalent to a Lagrangian structure on $\overline{\mu}\colon [M/G]\rightarrow [\g^*/G]$.
\end{thm}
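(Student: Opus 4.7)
The plan is to expand both sides using the explicit model for the de Rham algebra of a quotient given in Example~\ref{ex:DRquotient}. Applied to $[M/G]$ one has
\[
\DR([M/G])\cong(\DR(M)\otimes\Sym(\g^*[-2]))^G,
\]
and an element of the component $(\Omega^p(M)\otimes\Sym^q(\g^*[-2]))^G$ has weight $p+q$ and cohomological degree $p+2q$. An isotropic structure on $\overline{\mu}\colon[M/G]\to[\g^*/G]$ is a homotopy $h=h_0+h_1+\cdots$ with $(\d+\ddr)h=\overline{\mu}^*\omega_{[\g^*/G]}$, where $h_i$ has weight $2+i$ and degree $2$. Solving $p+q=2+i$ and $p+2q=2$ forces $q=-i$, so only $h_0\in(\Omega^2(M))^G$ survives, an invariant two-form $\sigma$ on $M$.

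Next I would identify the right-hand side explicitly. Since $\omega_{[\g^*/G]}=\ddr\lambda_{[\g^*/G]}$ with $\lambda_{[\g^*/G]}$ the identity in $(\cO_{\g^*}\otimes\g^*)^G$, the only nonzero weight piece of the form is $\omega_0\in(\Omega^1(\g^*)\otimes\g^*)^G$, and $\overline{\mu}^*\omega_0=\ddr\mu$. Splitting $(\d+\ddr)h=\overline{\mu}^*\omega$ by weight yields the two equations
\[
\d\sigma=\ddr\mu,\qquad \ddr\sigma=0,
\]
all higher weights being automatic. The internal differential $\d$ acts trivially on $\sigma$ via $\DR(M)$ (since $M$ is an ordinary smooth scheme) and reduces to the Cartan term $\sigma\mapsto-\iota_{a(\cdot)}\sigma$, so the first equation reads $\iota_{a(v)}\sigma=-\ddr\mu(v)$, i.e.\ the moment map equation up to a sign convention, and the second is closedness of $\sigma$. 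Hence isotropic structures on $\overline{\mu}$ are in natural bijection with $G$-invariant closed two-forms on $M$ for which $\mu$ is a moment map.

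Finally, to promote isotropic to Lagrangian one must check that the induced pairing $\bT_{L/X}\to\bL_L[0]$ is a quasi-isomorphism iff $\sigma$ is non-degenerate on $M$. I would write $\bT_{[M/G]}$ as the two-term complex $[\g\otimes\cO_M\xrightarrow{a}\T_M]$ in degrees $-1,0$ and $\overline{\mu}^*\bT_{[\g^*/G]}$ as $[\g\otimes\cO_M\to\g^*\otimes\cO_M]$, with the map induced by $\overline{\mu}$ being the identity on $\g\otimes\cO_M$ and $\ddr\mu$ on $\T_M$. The fiber $\bT_{L/X}$ is then quasi-isomorphic to $[\T_M\xrightarrow{\sigma^\flat}\Omega^1_M]$, and the pairing produced from $h_0$ is $\sigma^\flat$ in each slot, so non-degeneracy in the shifted sense coincides with the classical non-degeneracy of $\sigma$.

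The main obstacle is the last step: one has to identify the fiber $\bT_{L/X}$ correctly and verify that the pairing $\bT_{L/X}\to\bL_L[0]$ assembled from $h_0$ really coincides with $\sigma^\flat$. This is where the moment map equation enters essentially, ensuring that the arrows in the two rows of the tangent/cotangent complexes match up to produce $\sigma^\flat$ cleanly. The remaining steps are bookkeeping in the weight-degree bigrading of $\DR$.
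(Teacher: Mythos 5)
The paper does not actually prove this theorem: it quotes it from \cite[Section 2.2.1]{Cal} and \cite[Section 2.2]{Saf}, and your argument is essentially the standard proof given there — decompose $\DR([M/G])\cong(\DR(M)\otimes\Sym(\g^*[-2]))^G$ by weight and degree, observe that the homotopy $h$ collapses to a single invariant two-form $\sigma=h_0$, read off $\ddr$-closedness and the moment map equation from the two weight components of $(\d+\ddr)h=\overline{\mu}^*\omega$, and then verify non-degeneracy by a two-term-complex computation. The one imprecise point is your identification of $\bT_{L/X}$: the homotopy fibre of $\bT_{[M/G]}\to\overline{\mu}^*\bT_{[\g^*/G]}$ is (after cancelling the identity on the degree $-1$ copies of $\g\otimes\cO_M$) the complex $\T_M\to\g^*\otimes\cO_M$ in degrees $0$ and $1$ with differential induced by $\d\mu$, not $\T_M\xrightarrow{\sigma^\flat}\Omega^1_M$; the map to $\bL_{[M/G]}=(\Omega^1_M\to\g^*\otimes\cO_M)$ assembled from $h_0$ and $\omega_0$ is $\sigma^\flat$ in degree $0$ and the identity in degree $1$, with the moment map equation being exactly the statement that this is a chain map, and since the degree $1$ component is an isomorphism the whole map is a quasi-isomorphism iff $\sigma^\flat$ is invertible — which is the conclusion you want. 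One could also add a word on why the space of isotropic structures is discrete rather than a higher homotopy type (your weight count does show there are no contributions in degrees below $2$), since the theorem asserts an equivalence of data and not merely a bijection on components.
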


More generally, one can think of Lagrangians $L\rightarrow [\g^*/G]$ as derived Hamiltonian $G$-spaces. Given such a Lagrangian we have the underlying symplectic stack given by
\[L_{symp} = L\times_{[\g^*/G]} \g^*\]
which carries a natural $G$-action and the reduction
\[L_{red} = L\times_{[\g^*/G]} [\pt/G]\]
where $[\pt/G]\rightarrow [\g^*/G]$ is given by the inclusion of the origin.

By Theorem \ref{thm:lagrinter} both of these carry a 0-shifted symplectic structure. Moreover, if $M$ is a Hamiltonian $G$-space, then
\[[M/G]_{symp}\cong M\]
and
\[[M/G]_{red} \cong M//G = [\mu^{-1}(0)/G]\]
is the symplectic reduction of $M$.

Let us give two examples of Hamiltonian $G$-spaces.

\begin{enumerate}
\item Recall that the cotangent bundle of a $G$-manifold $X$ is naturally a Hamiltonian $G$-space and the Hamiltonian reduction $\T^* X // G$ recovers $\T^*(X/G)$. The same construction works on the derived level as well.

Let $X$ be a derived Deligne--Mumford stack with a $G$-action such that $[X/G]$ is also Deligne--Mumford. There is a morphism $Y=[X/G]\rightarrow \B G$. The 1-shifted conormal bundle $\N^*[1](Y/\B G)\rightarrow \T^*[1](\B G)$ is Lagrangian by Corollary \ref{cor:conormal}. The Hamiltonian reduction $\N^*[1](Y/\B G)_{red}$ is given by a composition of the Lagrangian correspondences
\[
\xymatrix{
& \B G \ar[dl] \ar[dr] && \T^*[1](\B G)\times_{\B G} Y \ar[dl] \ar[dr] && Y \ar[dl] \ar[dr] & \\
\pt && \T^*[1](\B G) && \T^*[1]Y && \pt
}
\]
where the composition of the two correspondences on the right gives \[N^*[1](Y/\B G)\rightarrow \T^*[1](\B G).\]

To relate it to the classical construction note that the shifted conormal complex $\N^*_{Y/\B G}[1]$ is the same as the relative cotangent complex $\bL_{Y/\B G}$. But the morphism $[\T^*X/G]\rightarrow [X/G]$ can be identified with the total space of the bundle $\bL_{Y/\B G}$. Therefore,
\[\N^*[1](Y/\B G)\cong [\T^*X / G]\]
and so $\N^*[1](Y/\B G)_{symp}\cong \T^* X$.

This construction remains valid for derived Artin stacks if one doesn't require non-degeneracy of the two-forms involved.

\item Consider a coadjoint orbit $\cO\subset \g^*$. Let $G_\cO$ be the stabilizer of a point in $\cO$. Then the map
\[[\cO/G]\cong [\pt/G_\cO]\rightarrow [\g^*/G]\]
is isotropic since a two-form of degree 1 on $[\pt/G_\cO]$ is necessarily zero. An easy check shows that the zero isotropic structure is in fact Lagrangian. The isotropic structure gives a two-form on $\cO$ which is nothing else but the Kirillov--Kostant--Souriau symplectic structure on a coadjoint orbit.

We define the Hamiltonian reduction of $M$ with respect to $G$ along a coadjoint orbit $\cO$ to be
\[M//_{\cO}G = [\mu^{-1}(\cO) / G] = [(M\times_{\g^*} \pt)/G] \cong [M/G] \times_{[\g^*/G]} [\cO/G].\]

It is again a Lagrangian intersection, so it carries a symplectic structure.
\end{enumerate}

\subsection{Quasi-Hamiltonian reduction}
\label{sect:qhamreduction}

In this section we assume $G$ is a reductive algebraic group. Choose a $G$-invariant non-degenerate bilinear form $c_G$ on $\g$ that we denote by $(-, -)$. By Example \ref{ex:BGsymplectic} it gives a 2-shifted symplectic structure on the classifying stack $\B G$. Therefore,
\[\left[\frac{G}{G}\right]\cong \B G \times_{\B G\times\overline{\B G}} \B G,\]
a self-intersection of the diagonal $\B G$, is a Lagrangian intersection and hence it carries a natural 1-shifted symplectic structure. Here and in the future the horizontal line denotes the adjoint quotient.

In \cite{Saf} we showed that this 1-shifted symplectic structure on $\left[\frac{G}{G}\right]$ has the following description. By Example \ref{ex:DRquotient} we have
\[\DR\left(\left[\frac{G}{G}\right]\right) = (\DR(G)\otimes \Sym(\mathfrak{g}^*[-2]))^G.\]

We have a two-form of degree 1
\[\omega_0 = -\frac{1}{2}(\theta+\overline{\theta}, -)\]
and a three-form of degree 0
\[\omega_1 = \frac{1}{12}(\theta, [\theta, \theta]),\]
where $\theta$ and $\overline{\theta}$ are the Maurer--Cartan forms in $\Omega^1(G)\otimes \mathfrak{g}$. The symplectic structure on $\left[\frac{G}{G}\right]$ is given by $\omega_0 + \omega_1$.

Let us recall \cite[Definition 2.2]{AMM} that a \emph{quasi-Hamiltonian $G$-space} is the following collection of data:
\begin{itemize}
\item A smooth scheme $M$ with a two-form $\omega$,
\item A $G$-action on $M$ preserving $\omega$,
\item A $G$-equivariant moment map $\mu\colon M\rightarrow G$ satisfying
\begin{align*}
\ddr\omega &= \mu^*\omega_1 \\
\iota_{a(v)}\omega &= -\mu^*\omega_0(v)
\end{align*}
for every $v\in\mathfrak{g}$.
\end{itemize}
Moreover, we require the following non-degeneracy condition: for every $x\in M$ we have
\[\ker(\omega_x) = \{a(v)\ |\ \Ad_{\mu(x)} v = -v\}.\]

\begin{thm}
The data of a quasi-Hamiltonian $G$-space $\mu\colon M\rightarrow G$ is equivalent to a Lagrangian structure on \[[M/G]\rightarrow \left[\frac{G}{G}\right].\]
\end{thm}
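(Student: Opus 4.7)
The plan is to mirror the Hamiltonian case of Section \ref{sect:hamreduction}, the only genuinely new ingredient being the cubic three-form $\omega_1$ appearing in the $1$-shifted symplectic form on $\left[\frac{G}{G}\right]$. Using Example \ref{ex:DRquotient} I identify
\[\DR([M/G]) \cong (\DR(M) \otimes \Sym(\g^*[-2]))^G,\]
with $\Omega^p(M)$ contributing in weight $p$ and cohomological degree $p$, and $\g^*[-2]$ in weight $1$ and degree $2$. An elementary bidegree count shows that in weight $\geq 2$ and total degree $2$ the only non-trivial summand is $\Omega^2(M)^G$: any $h_i$ for $i \geq 1$ in a null-homotopy $h = h_0 + h_1 + \dotsb$ of $\mu^*(\omega_0 + \omega_1)$ would require a negative power of $\g^*$. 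Hence the null-homotopy reduces to a single $G$-invariant classical two-form $\omega = h_0 \in \Omega^2(M)^G$.

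Next I unpack $\mu^*(\omega_0 + \omega_1) = (\d + \ddr)\omega$ weight by weight, using the explicit formula $\alpha \mapsto -\iota_{a(-)}\alpha$ for the Cartan part of $\d$ on $\DR([M/G])$ (Example \ref{ex:DRquotient}). This produces exactly the two quasi-Hamiltonian moment map equations
\[\iota_{a(v)}\omega = -\mu^*\omega_0(v), \quad v \in \g, \qquad \ddr \omega = \mu^*\omega_1,\]
which together with the $G$-invariance of $\omega$ match the defining data of a quasi-Hamiltonian $G$-space modulo non-degeneracy.

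For the Lagrangian condition I first compute the tangent and cotangent complexes. One has $\bT_{[M/G]} \simeq [\g \xrightarrow{a} \T_M]$ and $\mu^*\bT_{\left[\frac{G}{G}\right]} \simeq [\g \xrightarrow{1 - \Ad_\mu} \g]$ in degrees $-1, 0$, with the natural map between them being the identity in degree $-1$ and the left-trivialised differential $\d\mu$ in degree $0$; compatibility of differentials is precisely the $G$-equivariance of $\mu$. The homotopy fibre then simplifies to
\[\bT_{L/X} \simeq [\T_M \xrightarrow{\d\mu} \g]\]
in degrees $0, 1$, while $\bL_{[M/G]} \simeq [\Omega^1_M \xrightarrow{a^\vee} \g^*]$ already sits in the same degrees. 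The morphism $\bT_{L/X} \to \bL_{[M/G]}$ induced by the isotropic datum $\omega$ and the symplectic form on $\left[\frac{G}{G}\right]$ is $v \mapsto \iota_v\omega$ in degree $0$ and the isomorphism $\g \xrightarrow{\sim} \g^*$ induced by $c_G$ in degree $1$.

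The hardest step will be checking that this morphism of two-term complexes is a quasi-isomorphism if and only if the classical non-degeneracy condition $\ker(\omega_x) = \{a(v) \mid \Ad_{\mu(x)}v = -v\}$ holds pointwise. Since the degree $1$ component is already an isomorphism, the quasi-isomorphism condition reduces to an isomorphism on $H^0$ together with the identification of the image of $\d\mu$ with the orthogonal of $\ker a$ under $c_G$. I would carry this out by a direct pointwise calculation, using the first moment map equation and the $\Ad$-invariance of $c_G$ to rewrite $\mu^*\omega_0(v)$ in terms of $(1 + \Ad_\mu)\d\mu(v)$, thereby matching the quasi-isomorphism condition with the quasi-Hamiltonian non-degeneracy condition and completing the equivalence of data.
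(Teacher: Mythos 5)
The paper itself gives no proof of this theorem --- it is imported from \cite{Saf} (cf.\ \cite{Cal} for the Hamiltonian analogue) --- so I can only compare your attempt with the strategy carried out there. That strategy is exactly yours: reduce the null-homotopy $h$ to a single $G$-invariant two-form by a weight/degree count, read off the two moment-map identities from the weight-$2$ and weight-$3$ components of $(\d+\ddr)h=\mu^*(\omega_0+\omega_1)$, and then match non-degeneracy of $\bT_{L/X}\rightarrow\bL_L$ against the quasi-Hamiltonian non-degeneracy condition. Your first two steps are correct as stated, including the signs.

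The gap is in the non-degeneracy step. The degree-$1$ component of $\bT_{L/X}\rightarrow\bL_{[M/G]}$ is \emph{not} the isomorphism $\g\rightarrow\g^*$ induced by $c_G$. The degree-$1$ piece of $\bT_{L/X}$ is the degree-$0$ piece $\mu^*\T_G\cong\underline{\g}$ of $\mu^*\bT_{\left[\frac{G}{G}\right]}$ shifted into the fibre, and the map to the degree-$1$ piece $\underline{\g}^*$ of $\bL_{[M/G]}$ is contraction with the weight-$2$ component $\omega_0=-\tfrac{1}{2}(\theta+\overline{\theta},-)$ of the symplectic form; in a trivialization this is $c_G$ precomposed with $-\tfrac{1}{2}(1+\Ad_{\mu})$, which degenerates precisely on the $(-1)$-eigenspace of $\Ad_{\mu(x)}$. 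This is the entire reason the quasi-Hamiltonian axiom permits (indeed forces) $\ker(\omega_x)=\{a(v)\mid\Ad_{\mu(x)}v=-v\}$ rather than $\ker(\omega_x)=0$. Consequently your reduction ``the quasi-isomorphism condition reduces to an isomorphism on $H^0$'' does not apply: in the commutative square
\[
\xymatrix{
\T_M \ar[r]^{\d\mu} \ar[d]_{\omega} & \underline{\g} \ar[d]^{\nu} \\
\Omega^1_M \ar[r] & \underline{\g}^*
}
\]
with $\nu=-\tfrac{1}{2}c_G\circ(1+\Ad_{\mu})$, \emph{both} vertical maps can fail to be isomorphisms, and one must prove that the total map of two-term complexes is a quasi-isomorphism, checking $H^0$ and $H^1$ simultaneously using both moment-map equations. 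Your final paragraph gestures at the right computation (the appearance of $1+\Ad_\mu$), but as written it rests on the false premise that the degree-$1$ map is invertible, so the equivalence between the Lagrangian condition and the pointwise condition $\ker(\omega_x)=\{a(v)\mid\Ad_{\mu(x)}v=-v\}$ is not actually established.
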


As before, given a Lagrangian $L\rightarrow \left[\frac{G}{G}\right]$ we can regard it as a generalized quasi-Hamiltonian $G$-space. Given such a Lagrangian, we define its reduction to be
\[L_{red} = L\times_{\left[\frac{G}{G}\right]} [\pt/G]\]
with $[\pt/G]\hookrightarrow \left[\frac{G}{G}\right]$ the inclusion of the unit element. It carries a 0-shifted symplectic structure as an intersection of two Lagrangians. Note that
\[L_{qsymp} = L\times_{\left[\frac{G}{G}\right]} G\]
is not symplectic since $G\rightarrow \left[\frac{G}{G}\right]$ is not Lagrangian.

If $L=[M/G]$ for a quasi-Hamiltonian $G$-space $M$, then we have $L_{qsymp}\cong M$ and
\[L_{red} \cong [\mu^{-1}(e)/G]=M//G\]
the usual quasi-Hamiltonian reduction of $M$.

More generally, a conjugacy class $\cO\subset G$ gives a Lagrangian $[\frac{\cO}{G}]\rightarrow[\frac{G}{G}]$ and we define the quasi-Hamiltonian reduction of $M$ with respect to $G$ along $\cO$ to be the Lagrangian intersection
\[M//_\cO G = [\mu^{-1}(\cO) / G] = [(M\times_G \cO) / G] \cong [M/G]\times_{\left[\frac{G}{G}\right]} [\cO/G].\]

\section{Symplectic implosion}

\label{sect:symplecticimplosion}

In this section $G$ denotes a split connected reductive group over a characteristic zero field $k$.

\subsection{Grothendieck--Springer resolution}

Let $B\subset G$ be a Borel subgroup and $p\colon B\twoheadrightarrow H$ the abelianization map; we denote by $\b$ and $\h$ the corresponding Lie algebras. The kernel of $p$ is denoted by $N$ whose Lie algebra is denoted by $\n$. The constructions we are about to describe can be written in a way independent of the choice of the Borel, but we choose it for the sake of exposition.

One defines the Grothendieck--Springer simultaneous resolution $\widetilde{\g}$ to be the vector bundle \[\widetilde{\g}=G\times^B \b\]
over the flag variety $G/B$, see \cite[Section 3.1.31]{CG}.

We have a map $\widetilde{\g}\rightarrow \g$ given by
\[(g, x)\mapsto \Ad_g(x)\]
and $\widetilde{\g}$ can be described as the space of elements $x$ of $\g$ together with a choice of a Borel containing $x$.

There is a $G$-action on $\widetilde{\g}$ given by the left action on $G$. This makes $\widetilde{\g}\rightarrow \g$ into a $G$-equivariant map.

We also have a map $\widetilde{\g}\rightarrow \h$ given by the composition
\[G\times^B \b\rightarrow G\times^B \h\rightarrow \h\]
using the fact that $B$ acts trivially on $\h$.

Combining all these maps we get a correspondence
\begin{equation}
\label{eq:GScorrespondence}
\xymatrix{
& [\widetilde{\g}/G] \ar[dl] \ar[dr] & \\
[\g/G] && [\h/H]
}
\end{equation}
Note that $[\widetilde{\g}/G] \cong [\b/B]$.

Similarly, there is a group version of the Grothendieck--Springer resolution given by
\[\widetilde{G} = G\times^B B,\]
where $B$ acts on itself by conjugation. This gives a correspondence
\begin{equation}
\label{eq:multGScorrespondence}
\xymatrix{
& \left[\frac{\widetilde{G}}{G}\right] \ar[dl] \ar[dr] & \\
\left[\frac{G}{G}\right] && \left[\frac{H}{H}\right]
}
\end{equation}
where again \[\left[\frac{\widetilde{G}}{G}\right] \cong \left[\frac{B}{B}\right].\]

\subsection{Lagrangian structure}

In this section we will slightly generalize the discussion, so choose a parabolic subgroup $P\subset G$ with Levi factor $M$. We denote the corresponding Lie algebras by $\p$ and $\m$. The reader may assume that $P=B$ and $M=H$.

Pick a $G$-invariant non-degenerate symmetric bilinear pairing $c_G\in \Sym^2(\g^*)^G$. By restriction we get a bilinear pairing in $\Sym^2(\p^*)^P$. Similarly, we have a pullback morphism
\[\Sym^2(\m^*)^M\rightarrow \Sym^2(\p^*)^P.\]

\begin{prop}
\label{prop:parabolicpairing}
The morphism
\[\Sym^2(\m^*)^M\rightarrow \Sym^2(\p^*)^P\]
is an isomorphism.

Moreover, the composition
\[\Sym^2(\g^*)^G\rightarrow \Sym^2(\p^*)^P\cong \Sym^2(\m^*)^M\]
sends non-degenerate pairings to non-degenerate pairings.
\end{prop}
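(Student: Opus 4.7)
The plan is to use the Levi decomposition $\p = \m \oplus \u$, where $\u$ is the nilpotent radical of $\p$, together with the weight decomposition coming from the center $Z(M)$ of the Levi.

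First, I would make explicit the pullback map $\Sym^2(\m^*)^M \to \Sym^2(\p^*)^P$: it is induced by the quotient $\p \twoheadrightarrow \p/\u \cong \m$, so concretely it sends a form $B_0$ on $\m$ to the form on $\p$ whose value on $x+u$, $y+v$ (with $x,y\in\m$ and $u,v\in\u$) is $B_0(x,y)$. A direct check (using $[\u,\p]\subset \u$ and $[\m,\u]\subset \u$) shows this is genuinely $P$-invariant, so the map is well-defined.

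The heart of the argument is a weight decomposition. The center $Z(M)\subset M\subset P$ is a torus, and via $\mathrm{Ad}$ it decomposes $\g$ into weight spaces with $\m$ as the zero-weight part, $\u$ as the sum of positive-weight spaces (for a suitable choice of cocharacter cutting out $P$), and the opposite unipotent $\u^-$ as the sum of negative-weight spaces. A $P$-invariant (hence $Z(M)$-invariant) symmetric bilinear form $B$ on $\p$ must pair a weight $\alpha$ vector with a weight $-\alpha$ vector; but all weights on $\u$ are strictly positive, so $B(\u,\u)=0$ and $B(\m,\u)=0$. Hence $B$ is determined by its restriction $B|_{\m\times\m}$, which is $M$-invariant. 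This gives the inverse to the pullback map and proves bijectivity.

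For the second assertion, apply the same weight decomposition to the full $G$-invariant form $c_G\in \Sym^2(\g^*)^G$ on $\g = \u^-\oplus\m\oplus\u$. By the pairing-of-opposite-weights argument above, the matrix of $c_G$ in this block decomposition has the shape
\[
\begin{pmatrix} 0 & 0 & A \\ 0 & B & 0 \\ A^{\T} & 0 & 0 \end{pmatrix},
\]
where $B = c_G|_{\m\times\m}$ and $A$ is the pairing $\u^-\otimes \u \to k$. Non-degeneracy of $c_G$ is equivalent to non-degeneracy of both $A$ and $B$, so in particular $B$ is non-degenerate. Since the composite in the statement is precisely $c_G \mapsto c_G|_{\m\times\m}$ (under the identification from the first part, which reads off the $\m\times\m$ block), this is the desired conclusion. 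There is no real obstacle here; the only subtlety is being careful that $Z(M)$-invariance is already implied by $P$-invariance, and that the pullback map in the statement corresponds to the zero-extension construction rather than to restriction to $\m\subset \p$ (these two are inverse to each other under the identification).
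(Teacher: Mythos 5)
Your proof is correct and follows essentially the same route as the paper: both arguments rest on the observation that invariance under a torus inside $P$ forces any invariant symmetric pairing on $\p$ to vanish on $\u\times\p$, because the torus weights occurring in $\u$ cannot cancel against those occurring in $\p$, and both deduce non-degeneracy of the restriction to $\m$ from the same weight-pairing principle applied to $\g=\u^-\oplus\m\oplus\u$. The only cosmetic difference is that you grade by the center $Z(M)$ of the Levi (equivalently by a cocharacter cutting out $P$), whereas the paper uses the full root decomposition with respect to a maximal torus $\h\subset\m$ and argues root by root.
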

\begin{proof}
We denote by $\u$ the Lie algebra of the unipotent radical of $P$, so $\p\cong \u\oplus \m$. Without loss of generality, we may assume that $P$ is a standard parabolic subgroup, i.e. it contains our chosen Borel $B$. Choosing a splitting $H\subset B$ we get the root decomposition
\[\g\cong \h\oplus \bigoplus_{\alpha\in\Phi} \g_\alpha\]
where $\Phi$ is the set of roots of $\g$. We denote by $\Phi^+$ the set of positive roots with respect to $B$ and $e_\alpha\in\g_\alpha$ basis elements.

We have the following description of standard parabolics (see \cite[Proposition 14.18]{Bo}). Fix a subset $I\subset \Delta$ of simple roots and denote by $[I]$ the root subsystem generated by the roots in $I$. We also denote $\Phi(I)^+=\Phi^+-[I]$, the set of positive roots of $\g$ not lying in $[I]$. Then we have isomorphisms
\begin{align*}
\u&\cong \bigoplus_{\alpha\in\Phi(I)^+} \g_\alpha, \\
\m&\cong \h\oplus\bigoplus_{\alpha\in[I]} \g_\alpha.
\end{align*}

The morphism $\Sym^2(\m^*)^M\rightarrow \Sym^2(\p^*)^P$ is clearly injective, so we have to prove it is surjective. For this it is enough to show that any $P$-invariant symmetric bilinear pairing $(-, -)$ on $\p$ vanishes on $\u$. Indeed, we have
\[0 = ([h, e_\alpha], e_\beta) + (e_\alpha, [h, e_\beta]) = (\alpha(h)+\beta(h))(e_\alpha, e_\beta)\]
for any $h\in\h$ and $\alpha,\beta$ two roots. If $\alpha\in\Phi(I)^+$ and $\beta\in[I]$, then $\alpha+\beta$ is never zero, so $(e_\alpha, e_\beta)=0$. One similarly shows that $(e_\alpha, h) = 0$ for any $h\in\h$ and $\alpha\in\Phi(I)^+$.

Finally, suppose a pairing in $\Sym^2(\g^*)^G$ is non-degenerate and consider $x\in\h\subset \m$. Since $c_G$ is non-degenerate, there is a $y\in\g$ such that $(x, y)\neq 0$. But since $c_G$ is $H$-invariant, $y$ is necessarily in $\h\subset \m$. Next, consider $e_\alpha\in\g_\alpha\subset\m$ for $\alpha\in [I]$. By non-degeneracy of $c_G$ there is a $y\in\g$ such that $(e_\alpha,y)\neq 0$. Again using $H$-invariance of the pairing we deduce that $y\in\g_{-\alpha}\subset \m$ which proves the non-degeneracy of the pairing $(-, -)$ restricted to $\m$.
\end{proof}

We denote by $c_M\in\Sym^2(\m^*)^M$ the image of $c_G$ under
\[\Sym^2(\g^*)^G\rightarrow \Sym^2(\p^*)^P\cong \Sym^2(\m^*)^M\]
which is non-degenerate by the previous Proposition.

The choice of $c_G$ allows us to identify $[\g/G]\cong [\g^*/G]$ and $[\m/M]\cong [\m^*/M]$ and therefore by Section \ref{sect:hamreduction} we obtain 1-shifted symplectic structures on $[\g/G]$ and $[\m/M]$. Let us prove the following statement.

\begin{thm}
The correspondence
\begin{equation}
\label{eq:parabolicGScorrespondence}
\xymatrix{
& [\p/P] \ar[dl] \ar[dr] & \\
[\g/G] && [\m/M]
}
\end{equation}
is Lagrangian.
\end{thm}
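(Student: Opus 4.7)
The plan is to realize the correspondence \eqref{eq:parabolicGScorrespondence} as a composition of two Lagrangian correspondences of shifted cotangent type. Using $c_G$ and the induced $c_M$ from Proposition \ref{prop:parabolicpairing}, identify $[\g/G]\cong[\g^*/G]\cong\T^*[1](\B G)$ and $[\m/M]\cong[\m^*/M]\cong\T^*[1](\B M)$; also note the canonical identification $\T^*[1](\B P)\cong[\p^*/P]$, which uses no pairing on $\p$.

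\textbf{Two building blocks.} The inclusion $P\hookrightarrow G$ and the surjection $P\twoheadrightarrow M$ induce morphisms $\B P\to\B G$ and $\B P\to\B M$ of derived Artin stacks. Apply the shifted cotangent correspondence proposition (the one preceding Corollary \ref{cor:conormal}, extended to the Artin setting by the subsequent Remark) to each of these; this produces Lagrangian correspondences
\[
[\g/G]\leftarrow[\g/P]\to[\p^*/P],\qquad [\m/M]\leftarrow[\m/P]\to[\p^*/P],
\]
where the middle terms are computed as $\T^*[1](\B G)\times_{\B G}\B P\cong[\g^*/P]\cong[\g/P]$ and $\T^*[1](\B M)\times_{\B M}\B P\cong[\m^*/P]\cong[\m/P]$, and the right legs are induced by the dual linear maps $\g^*\twoheadrightarrow\p^*$ and $\m^*\hookrightarrow\p^*$ of $\p\hookrightarrow\g$ and $\p\twoheadrightarrow\m$ respectively.

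\textbf{Composition.} Reverse the second correspondence and compose with the first along $[\p^*/P]$. That a composition of Lagrangian correspondences is again Lagrangian is a two-step application of Theorem \ref{thm:lagrcomp}: the product $[\g/P]\times[\m/P]$ is a Lagrangian in $\overline{[\g/G]}\times[\p^*/P]\times[\m/M]\times\overline{[\p^*/P]}$, and intersecting this with the Lagrangian diagonal $[\p^*/P]\to[\p^*/P]\times\overline{[\p^*/P]}$ produces a Lagrangian
\[
[\g/P]\times_{[\p^*/P]}[\m/P]\;\longrightarrow\;\overline{[\g/G]}\times[\m/M].
\]

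\textbf{Identification and main obstacle.} It remains to identify the middle with $[\p/P]$ compatibly with the legs. Because $\g\cong\g^*\twoheadrightarrow\p^*$ is surjective on vector spaces, the derived fiber product $\g\times_{\p^*}\m$ is classical. Using the orthogonality relations recorded in the proof of Proposition \ref{prop:parabolicpairing}, namely that $c_G$ vanishes on $\u\times\u$ and $\u\times\m$ while pairing $\u$ nondegenerately with an opposite subspace $\u^-$, one checks that the $c_G$-annihilator of $\u$ inside $\g$ is precisely $\p$; an explicit matching of functionals then exhibits $\g\times_{\p^*}\m\cong\p$, with first projection equal to the inclusion $\p\hookrightarrow\g$ and second projection equal to the canonical quotient $\p\twoheadrightarrow\m$. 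This produces the correspondence \eqref{eq:parabolicGScorrespondence} with its Lagrangian structure. The main difficulty is bookkeeping: correctly handling the opposite symplectic structures while composing and verifying that the identifications via $c_G$ and $c_M$ intertwine the abstract Lagrangian obtained from the cotangent machinery with the geometric correspondence of the theorem.
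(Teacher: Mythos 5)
Your route is genuinely different from the paper's and is essentially viable, but it carries a dependency that the paper deliberately avoids. The paper's proof is direct and self-contained: it observes that the $1$-shifted symplectic structures on $[\g/G]$ and $[\m/M]$ are exact with primitives $c_G$ and $c_M$, that these restrict to the same element of $\Sym^2(\p^*)^P$ (which hands you the isotropic structure for free), and then reduces non-degeneracy to the exactness of the four-term sequence $0\to\p\to\g\oplus\m\to\p^*\to 0$ of $P$-representations, checked by an Euler-characteristic count using $\dim\g=\dim\m+2\dim\u$. Your factorization through the cotangent correspondences $[\g/G]\leftarrow[\g/P]\to[\p^*/P]$ and $[\p^*/P]\leftarrow[\m/P]\to[\m/M]$ is attractive because it explains where the correspondence comes from, and the identification $\g\times_{\p^*}\m\cong\p$ you sketch is correct: the fiber product is classical by surjectivity of $\g\to\p^*$, the dimensions match, and the compatibility of $c_G$ with $c_M$ on $\p$ shows the natural map from $\p$ is an isomorphism onto it, with the two projections being the inclusion $\p\hookrightarrow\g$ and the quotient $\p\twoheadrightarrow\m$ as required.

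The caveat is the non-degeneracy of your building blocks. The proposition you invoke is proved in the paper only for derived Deligne--Mumford stacks, whereas $\B G$, $\B P$, $\B M$ are Artin stacks with positive-dimensional stabilizers; the subsequent Remark grants you, from the paper's own arguments, only an \emph{isotropic} structure in the Artin case, with the upgrade to Lagrangian attributed to then-recent external work of Calaque. So as written your argument either imports that result or leaves the Lagrangian condition on the composite unverified. If instead you close the gap by checking non-degeneracy of the composite by hand, you are led to verify exactly the sequence $0\to\p\to\g\oplus\m\to\p^*\to 0$ that the paper checks directly, at which point the cotangent machinery is scaffolding around the same linear algebra. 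The trade-off is clear: your approach buys conceptual transparency and would apply uniformly to other maps of classifying stacks, while the paper's buys brevity and independence from unproved inputs. (Your two-step composition of correspondences via the diagonal of $[\p^*/P]$, using Theorem \ref{thm:lagrcomp} twice, is fine, and the sign bookkeeping you flag is real but routine.)
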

\begin{proof}
By construction the 1-shifted symplectic structures on $[\g/G]$ and $[\m/M]$ are exact. The primitive for the 1-shifted symplectic structure on $[\g/G]$ is given by
\[c_G\in \Sym^2(\mathfrak{g}^*)^G\subset (\cO(\g)\otimes \g^*)^G\subset (\DR(\g)\otimes \Sym(\g^*[-2]))^G[2]\]
and similarly for $[\m/M]$. By assumption both $c_G$ and $c_M$ restrict to the same element of $\Sym^2(\p^*)^P$ which gives the isotropic structure on the correspondence \eqref{eq:parabolicGScorrespondence}.

Denote $L=[\p/P]$ and $X=\overline{[\g/G]}\times [\m/M]$. To simplify the notation, let's denote the trivial vector bundle with fiber $V$ by $\underline{V}$ when the base space is clear.

Then the tangent complex $\bT_L$ is
\[\bT_L=\underline{\p}[1] \oplus \underline{\p}\]
with the differential given by the adjoint action.

To show that the isotropic structure on $L\rightarrow X$ is Lagrangian we have to prove that
\[\underline{\p}[1]\oplus \underline{\p}\rightarrow \underline{\g}[1]\oplus \underline{\g}\oplus \underline{\m}[1]\oplus \underline{\m}\rightarrow \underline{\p}^*[1]\oplus \underline{\p}^*\]
is a fiber sequence of quasi-coherent sheaves on $[\m/M]$ where the second morphism is given by composing $-c_G$ and $c_M$ with the restriction morphisms $\g^*\rightarrow \p^*$ and $\m^*\rightarrow \p^*$. For this it is enough to prove that
\begin{equation}
\label{eq:parsequence}
0\rightarrow \p\rightarrow \g\oplus \m\rightarrow \p^*\rightarrow 0
\end{equation}
is an exact sequence of vector spaces.

Clearly, the sequence is exact at the first and third terms. The Euler characteristic of the sequence is
\[2\dim \p - \dim\g - \dim\m = \dim \m+2\dim\u - \dim\g = 0,\]
which coincides with the dimension of the cohomology of the middle term, which is, therefore, also zero.
\end{proof}
\begin{cor}
\label{cor:GScorrespondence}
The Grothendieck--Springer correspondence \eqref{eq:GScorrespondence}
\[
\xymatrix{
& [\widetilde{\g}/G] \ar[dl] \ar[dr] & \\
[\g/G] && [\h/H]
}
\]
is Lagrangian.
\end{cor}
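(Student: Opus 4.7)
The plan is to deduce the corollary as a direct specialization of the preceding Theorem to the case $P = B$, $M = H$. The Borel subgroup is a parabolic subgroup (the minimal one), and its Levi factor is precisely the maximal torus $H$. So the Theorem, applied to this choice, immediately produces a Lagrangian correspondence
\[
\xymatrix{
& [\b/B] \ar[dl] \ar[dr] & \\
[\g/G] && [\h/H].
}
\]

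The second step is just an identification of stacks. The text following equation \eqref{eq:GScorrespondence} already records that $[\widetilde{\g}/G] \cong [\b/B]$, coming from $\widetilde{\g} = G \times^B \b$; this is the standard induction equivalence $[(G \times^B Y)/G] \cong [Y/B]$. Under this equivalence, the left-hand map $[\widetilde{\g}/G] \to [\g/G]$, which is induced by $(g,x)\mapsto \Ad_g(x)$, becomes the map $[\b/B] \to [\g/G]$ induced by the inclusions $\b \hookrightarrow \g$ and $B \hookrightarrow G$; the right-hand map $[\widetilde{\g}/G] \to [\h/H]$, which factors as $G \times^B \b \to G \times^B \h \to \h$, becomes the map $[\b/B] \to [\h/H]$ induced by the projections $\b \twoheadrightarrow \h$ and $B \twoheadrightarrow H$. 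These are exactly the maps in the Theorem for $P = B$, $M = H$.

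There is no real obstacle here, since all the substantive work, namely the isotropic structure coming from the two pairings $c_G$ and $c_H$ restricting to the same element of $\Sym^2(\b^*)^B$, and the non-degeneracy check reducing to exactness of the sequence $0 \to \b \to \g \oplus \h \to \b^* \to 0$, has already been carried out in the Theorem. I would simply state the corollary as "immediate from the Theorem applied with $P=B$ and $M=H$, together with the identification $[\widetilde{\g}/G] \cong [\b/B]$."
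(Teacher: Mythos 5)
Your proposal is correct and matches the paper's (implicit) argument exactly: the corollary is stated without proof precisely because it is the specialization of the preceding theorem to $P=B$, $M=H$, combined with the identification $[\widetilde{\g}/G]\cong[\b/B]$ already recorded after \eqref{eq:GScorrespondence}. Your additional verification that the two legs of the correspondence match under this equivalence is a worthwhile explicit check but introduces nothing beyond what the paper takes for granted.
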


Similarly, the group version of the Grothendieck--Springer correspondence \eqref{eq:multGScorrespondence} is also Lagrangian. To show this, we need a lemma.

Recall that the choice of $c_G$ gave a 2-shifted symplectic structure on $\B G$ by Example \ref{ex:BGsymplectic}. Its restriction $c_M$ to $M$ is also non-degenerate, so defines a 2-shifted symplectic structure on $\B M$.

\begin{lm}
\label{lm:BLagrangian}
The correspondence
\[
\xymatrix{
& \B P \ar[dl] \ar[dr] & \\
\B G && \B M
}
\]
is Lagrangian. Moreover, the space of Lagrangian structures is contractible.
\end{lm}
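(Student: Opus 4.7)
The plan is to mirror the proof of the preceding theorem in the 2-shifted setting of classifying stacks. By Example \ref{ex:BGsymplectic}, the 2-shifted symplectic structures on $\B G$ and $\B M$ are the tautological weight-2 elements $c_G \in \Sym^2(\g^*)^G \subset \DR(\B G)$ and $c_M \in \Sym^2(\m^*)^M \subset \DR(\B M)$. Their pullbacks along $\B P \to \B G$ and $\B P \to \B M$ land in $\Sym^2(\p^*)^P$, and by the defining property of $c_M$ they coincide there. Hence the pullback of the symplectic form on $\overline{\B G} \times \B M$ to $\B P$ vanishes strictly, yielding a tautological isotropic structure.

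For Lagrangian non-degeneracy, set $L = \B P$ and $X = \overline{\B G} \times \B M$. Then $\bT_L \cong \p[1]$, $\bT_X|_L \cong (\g \oplus \m)[1]$, and the 2-shifted symplectic form on $X$ identifies $\bT_X|_L$ with $\bL_X|_L[2]$. Unwinding definitions, the Lagrangian condition reduces to the statement that
\[
\bT_L \to \bT_X|_L \to \bL_L[2]
\]
is a fiber sequence, equivalently that
\[
0 \to \p \to \g \oplus \m \to \p^* \to 0
\]
is exact as a sequence of $P$-representations, with first map $x \mapsto (x, \pi(x))$ for $\pi \colon \p \to \m$ the Levi projection and second map built from the pairings. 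This is precisely the exact sequence \eqref{eq:parsequence} established in the proof of the previous theorem.

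For contractibility of the space of Lagrangian structures, since non-degeneracy is a property it suffices to show the space of isotropic structures is contractible. Using the identification $\DR(\B P) \cong \Gamma(\B P, \Sym(\p^*[-2]))$, the weight-$k$ summand of $\DR(\B P)$ in cohomological degree $D$ is $H^{D - 2k}(P, \Sym^k \p^*)$, which vanishes for $D < 2k$ since rational algebraic group cohomology is non-negative. The weight-$\geq 2$ part of $\DR(\B P)$ therefore vanishes in cohomological degrees below $4$, so $\Omega^{2, cl}(\B P, 2)$ is cohomologically concentrated in non-negative degrees with $H^0 \cong \Sym^2(\p^*)^P$. Its underlying space is thus discrete, and the space of null-homotopies of a specified form is either empty or a single point; together with non-emptiness from the first step, this yields contractibility.

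The main obstacle is the contractibility step: since $P$ is non-reductive, the formula from Example \ref{ex:DRquotient} does not apply directly, but only the non-negativity of rational algebraic group cohomology is required, which holds in any characteristic. The isotropic structure and the non-degeneracy check both reduce immediately to Proposition \ref{prop:parabolicpairing} and the short exact sequence \eqref{eq:parsequence}.
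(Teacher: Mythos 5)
Your proposal is correct and follows essentially the same route as the paper: the isotropic structure comes from $c_G$ and $c_M$ pulling back to the same element of $\Sym^2(\p^*)^P$ (Proposition \ref{prop:parabolicpairing}), non-degeneracy reduces to the exact sequence \eqref{eq:parsequence}, and contractibility follows from the degree count in $\DR(\B P)\cong \C^\bullet(P,\Sym(\p^*[-2]))$, which the paper phrases equivalently as the vanishing of closed two-forms of degree $1$ on $\B P$. Your remark that only non-negativity of rational group cohomology of the (non-reductive) group $P$ is needed is a valid and slightly more careful justification of the identification the paper invokes via \eqref{eq:DRcdga}.
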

\begin{proof}
By \eqref{eq:DRcdga} we have an identification of graded complexes
\[\DR(\B P)\cong \C^\bullet(P, \Sym(\p^*[-2])).\]

By assumption the symplectic structures on $\B G$ and $\B M$ determined by $c_G\in\Sym^2(\g^*)^G$ and $c_M\in\Sym^2(\m^*)^M$ pull back to the same element $\Sym^2(\p^*)^P\in\DR(\B P)$ which gives an isotropic structure on the correspondence. The space of such isotropic structures is a torsor over the space of closed two-forms on $\B P$ of degree 1. From the explicit identification of $\DR(\B P)$ above we see that every such form is zero.

To prove that this isotropic structure is Lagrangian we have to show that
\[\bT_{\B P}\rightarrow f^* \bT_{\B G\times \B M}\rightarrow \bL_{\B P}[2]\]
is a fiber sequence where $f\colon \B P\rightarrow \B G\times \B M$.

In other words, we have to show that the sequence of $P$-representations
\[0\rightarrow \p\rightarrow \g\oplus \m\rightarrow \p^*\rightarrow 0\]
is exact which we have already checked in the course of the proof of the previous theorem (see \eqref{eq:parsequence}).
\end{proof}

We have $\Map(S^1_\B, \B G)\cong \left[\frac{G}{G}\right]$, so now we have to show that the functor $\Map(S^1_\B, -)$ sends Lagrangian morphisms to Lagrangian morphisms. This follows from the AKSZ formalism which we briefly recall.

Recall the notion of $\cO$-compact stacks and $\cO$-orientation on such stacks \cite[Section 2.1]{PTVV}. For instance, given a topological space $Z$ we can regard it as a constant derived stack $Z_\B$. If $Z$ is a finite CW complex, the derived stack $Z_\B$ is $\cO$-compact. Moreover, if $Z$ is a closed $d$-dimensional manifold, the derived stack $Z_\B$ has an $\cO$-orientation of degree $d$.

\begin{thm}
Let $L\rightarrow X$ be a Lagrangian morphism to an $n$-shifted symplectic stack. Let $Y$ be an $\cO$-compact stack equipped with an $\cO$-orientation of degree $d$. Then the morphism
\[\Map(Y, L)\rightarrow \Map(Y, X)\]
is a Lagrangian morphism to an $(n-d)$-shifted symplectic stack.
\end{thm}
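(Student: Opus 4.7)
The plan is to follow the AKSZ recipe of pull-evaluate-integrate, first producing the shifted symplectic structure on $\Map(Y, X)$ and then pushing the isotropic homotopy on $L\to X$ through the same machine, finally checking non-degeneracy via the orientation pairing.

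First I would assemble the evaluation maps
\[
\ev_X\colon Y\times \Map(Y, X)\longrightarrow X, \qquad \ev_L\colon Y\times \Map(Y, L)\longrightarrow L,
\]
together with the projections $\pi_X, \pi_L$ to the respective mapping stacks. The closed two-form $\ev_X^*\omega$ lives in $\Omega^{2, cl}(Y\times \Map(Y, X), n)$. Using the $\cO$-orientation $[Y]\colon \Gamma(Y, \cO_Y)\to k[-d]$, PTVV construct an ``integration along the fibers'' map $\int_{[Y]}\colon \Omega^{2, cl}(Y\times \Map(Y, X), n)\to \Omega^{2, cl}(\Map(Y, X), n-d)$, and I would set $\omega_{\Map(Y, X)} = \int_{[Y]}\ev_X^*\omega$. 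Non-degeneracy follows from the identification $\bT_{\Map(Y, X)}\cong (\pi_X)_*\ev_X^*\bT_X$ (for $\cO$-compact $Y$): contracting with this two-form factors through $\omega\colon \bT_X\xrightarrow{\sim}\bL_X[n]$ and then through the Poincaré-duality pairing provided by the $d$-orientation, $(\pi_X)_*\ev_X^*\bL_X[n]\xrightarrow{\sim}\bL_{\Map(Y, X)}[n-d]$; both maps are equivalences, so the resulting contraction is a quasi-isomorphism. This is essentially PTVV's Theorem 2.5.

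Next, applying the same construction to the isotropic homotopy $h\in \Omega^{2, cl}(L, n)$ trivializing $f^*\omega$, I would set $h_{\Map} = \int_{[Y]}\ev_L^*h$. Since pullback and integration along $[Y]$ are compatible with the map $\Map(Y, f)\colon \Map(Y, L)\to \Map(Y, X)$, this gives an isotropic homotopy for $\Map(Y, f)^*\omega_{\Map(Y, X)}$. To verify non-degeneracy I would use that the functor $\Map(Y, -)$ is a right adjoint on the overcategory under $Y$, so it preserves fiber sequences of relative tangent complexes: from $\bT_{L/X}\xrightarrow{\sim}\bL_L[n-1]$ one gets $\bT_{\Map(Y, L)/\Map(Y, X)}\cong (\pi_L)_*\ev_L^*\bT_{L/X}$. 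Applying $(\pi_L)_*\ev_L^*$ to the Lagrangian quasi-isomorphism on $L$ and then composing with the orientation-induced duality $(\pi_L)_*\ev_L^*\bL_L[n-1]\xrightarrow{\sim}\bL_{\Map(Y, L)}[n-d-1]$ yields exactly the required quasi-isomorphism $\bT_{\Map(Y, L)/\Map(Y, X)}\xrightarrow{\sim}\bL_{\Map(Y, L)}[n-d-1]$.

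The main obstacle is the non-degeneracy step: one must be confident that $\cO$-compactness and the $\cO$-orientation really do produce a perfect duality pairing at the level of tangent complexes of mapping stacks, and that all the identifications $\bT_{\Map(Y,-)}\cong (\pi_{(-)})_*\ev_{(-)}^*\bT_{(-)}$ are compatible with the pullbacks along $\Map(Y, f)$. This is delicate because these are derived pushforwards, so one needs the projection formula and base change in a derived setting; but once these formal properties of $\cO$-compact/oriented stacks are granted (as set up in PTVV Section~2.1), the Lagrangian non-degeneracy reduces to the Lagrangian non-degeneracy of $f$ combined with Poincaré duality fiberwise over $\Map(Y, L)$, and the result follows.
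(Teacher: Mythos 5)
Your proposal is correct and follows exactly the route the paper intends: the paper omits the proof, stating that it is identical to the AKSZ argument of \cite[Theorem 2.5]{PTVV} (see also \cite[Theorem 2.10]{Cal}), and your pull--evaluate--integrate construction of $\omega_{\Map(Y,X)}$, the transgression of the isotropic homotopy, and the non-degeneracy check via $\bT_{\Map(Y,-)}\cong \pi_*\ev^*\bT_{(-)}$ and the orientation-induced duality is precisely that argument.
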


The proof of this theorem is identical to the proof of the AKSZ theorem \cite[Theorem 2.5]{PTVV} (see also \cite[Theorem 2.10]{Cal}), so we omit it. Let us present two corollaries.

Besides $S^1_\B$, a natural example of an $\cO$-compact stack equipped with an $\cO$-orientation of degree $1$ is an elliptic curve $E$ equipped with a trivialization of the canonical bundle. We denote by
\[\Bun_G(E)=\Map(E, \B G)\]
the moduli stack of $G$-bundles on the elliptic curve $E$. By \cite[Theorem 2.5]{PTVV} both $\Bun_G(E)$ and $\Bun_M(E)$ carry a 1-shifted symplectic structure.

\begin{cor}
\label{cor:multGScorrespondence}
The correspondences
\[
\xymatrix{
& \left[\frac{P}{P}\right] \ar[dl] \ar[dr] & \\
\left[\frac{G}{G}\right] && \left[\frac{M}{M}\right]
}
\]
and
\[
\xymatrix{
& \Bun_P(E) \ar[dl]\ar[dr] & \\
\Bun_G(E) && \Bun_M(E)
}
\]
are Lagrangian.
\end{cor}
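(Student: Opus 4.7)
The plan is to derive both statements uniformly by applying the AKSZ-type mapping-stack theorem stated just above to the base Lagrangian correspondence provided by Lemma \ref{lm:BLagrangian}.

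First, I would recall that Lemma \ref{lm:BLagrangian} furnishes a Lagrangian correspondence
\[
\xymatrix{
& \B P \ar[dl] \ar[dr] & \\
\B G && \B M
}
\]
between the $2$-shifted symplectic stacks $\B G$ and $\B M$ (with symplectic structures determined by $c_G$ and $c_M$ respectively). Equivalently, $\B P \to \overline{\B G} \times \B M$ is a Lagrangian morphism into a $2$-shifted symplectic stack.

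Next, I would invoke the AKSZ theorem with $Y$ an $\cO$-compact stack equipped with an $\cO$-orientation of degree $1$: namely $Y = S^1_\B$ in the first case, and $Y = E$ with a trivialization of the canonical bundle in the second case. Applying $\Map(Y, -)$ to the Lagrangian morphism $\B P \to \overline{\B G} \times \B M$ produces a Lagrangian morphism
\[
\Map(Y, \B P) \rightarrow \Map(Y, \overline{\B G} \times \B M) \cong \overline{\Map(Y, \B G)} \times \Map(Y, \B M)
\]
into a $(2-1) = 1$-shifted symplectic stack, which is exactly the Lagrangian correspondence structure claimed.

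Finally, I would identify the mapping stacks explicitly. For $Y = S^1_\B$ one has $\Map(S^1_\B, \B G) \cong \left[\frac{G}{G}\right]$ and similarly for $P$ and $M$, yielding the multiplicative Grothendieck--Springer correspondence \eqref{eq:multGScorrespondence} (and its parabolic generalization). For $Y = E$ one has $\Map(E, \B G) \cong \Bun_G(E)$ and likewise for $P$, $M$, giving the elliptic correspondence. No step is really an obstacle here: the lemma supplies the base case, the AKSZ theorem does all the work, and the only remaining content is the standard identification of mapping stacks into classifying stacks with moduli of bundles on the source.
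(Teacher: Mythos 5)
Your proposal is exactly the paper's argument: the paper dispatches both correspondences in one line by applying $\Map(S^1_\B,-)$ and $\Map(E,-)$ to the Lagrangian correspondence of Lemma \ref{lm:BLagrangian} via the AKSZ-type theorem stated just above the corollary. Your additional remarks (the identification of the target mapping stack with the product of opposites, and the explicit identifications $\Map(S^1_\B,\B G)\cong\left[\frac{G}{G}\right]$ and $\Map(E,\B G)\cong\Bun_G(E)$) only spell out what the paper leaves implicit, so the two proofs coincide.
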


Both statements are obtained by applying $\Map(S^1_\B, -)$ and $\Map(E, -)$ to the correspondence in Lemma \ref{lm:BLagrangian}.

\begin{cor}
The group version of the Grothendieck--Springer correspondence \eqref{eq:multGScorrespondence}
\[
\xymatrix{
& \left[\frac{\widetilde{G}}{G}\right] \ar[dl] \ar[dr] & \\
\left[\frac{G}{G}\right] && \left[\frac{H}{H}\right]
}
\]
is Lagrangian.
\end{cor}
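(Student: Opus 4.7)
The plan is to deduce the statement as a direct specialization of the preceding Corollary \ref{cor:multGScorrespondence}. Take the parabolic $P = B$ in that corollary; its Levi factor is then $M = H$, the maximal torus. Corollary \ref{cor:multGScorrespondence} (already proved by applying the AKSZ functor $\Map(S^1_\B,-)$ to the Lagrangian correspondence of Lemma \ref{lm:BLagrangian}) gives that
\[
\left[\frac{G}{G}\right] \longleftarrow \left[\frac{B}{B}\right] \longrightarrow \left[\frac{H}{H}\right]
\]
is a Lagrangian correspondence of $1$-shifted symplectic stacks. It only remains to identify this with the group Grothendieck--Springer correspondence.

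For this, I would use the isomorphism $\left[\frac{\widetilde{G}}{G}\right] \cong \left[\frac{B}{B}\right]$ already recorded in the paper. It comes directly from the definition $\widetilde{G} = G\times^B B$ with $G$ acting by left translation on the first factor: quotienting by $G$ collapses that factor and leaves $B$ modulo its residual conjugation action on itself. Under this identification, the projection $(g,b)\mapsto gbg^{-1}$ to $\left[\frac{G}{G}\right]$ becomes the map $\left[\frac{B}{B}\right]\to\left[\frac{G}{G}\right]$ induced by the inclusion $B\hookrightarrow G$, while the projection to $\left[\frac{H}{H}\right]$ factors through the abelianization $B\twoheadrightarrow H$ (using that $B$ acts trivially on $H$ by conjugation). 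These are exactly the two legs of the correspondence of Corollary \ref{cor:multGScorrespondence} in the case $P=B$, $M=H$, so the Grothendieck--Springer correspondence inherits its Lagrangian structure.

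There is no real obstacle here, since the substantive content has already been carried out upstream: the non-degeneracy of the pairing $c_H$ and exactness of the sequence $0\to \b \to \g\oplus \h\to \b^*\to 0$ are subsumed in Proposition \ref{prop:parabolicpairing} and the proof of Lemma \ref{lm:BLagrangian}, and the transport across $\Map(S^1_\B,-)$ is handled by the AKSZ statement. The only care needed is to check that the natural maps out of $[\widetilde{G}/G]$ genuinely correspond, under the identification with $[B/B]$, to the maps coming from the group-theoretic inclusion $B\hookrightarrow G$ and projection $B\twoheadrightarrow H$; this is immediate from unwinding the quotient.
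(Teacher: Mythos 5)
Your proof is correct and follows exactly the route the paper intends: specialize Corollary \ref{cor:multGScorrespondence} to $P=B$, $M=H$, and transport the Lagrangian structure across the identification $\left[\frac{\widetilde{G}}{G}\right]\cong\left[\frac{B}{B}\right]$, checking that the two legs match the inclusion $B\hookrightarrow G$ and the abelianization $B\twoheadrightarrow H$. The paper leaves this specialization implicit; your write-up just makes it explicit.
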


\subsection{Symplectic implosion}

Recall from Section \ref{sect:hamreduction} that one can interpret Lagrangians in $[\g/G]$ as Hamiltonian $G$-spaces. More precisely, given a Lagrangian $L\rightarrow [\g/G]$, the space $L_{symp} = L\times_{[\g/G]} \g$ is a Hamiltonian $G$-space if it is a smooth scheme.

Since the Grothendieck--Springer correspondence \eqref{eq:GScorrespondence} is Lagrangian, we can use it and Theorem \ref{thm:lagrcomp} to turn Lagrangians in $[\g/G]$ into Lagrangians in $[\h/H]$ which are Hamiltonian $H$-spaces. If $X$ is a Hamiltonian $G$-space, the composition of the Lagrangian $[X/G]\rightarrow [\g/G]$ and the Grothendieck--Springer correspondence is given by
\[[X/G]\times_{[\g/G]} [\b/B]\cong [([X/G]\times_{[\g/G]} [\b/N])/H].\]

Thus, $[X/G]\times_{[\g/G]} [\b/N]$ is a Hamiltonian $H$-space.

\begin{defn}
\label{defn:symplimplosion}
The \emph{symplectic implosion} of a Hamiltonian $G$-space $X$ is the Hamiltonian $H$-space
\[X_{impl} = [X/G]\times_{[\g/G]} [\b/N].\]
\end{defn}
The moment map $X_{impl}\rightarrow \h\cong\h^*$ is given by the projection map $[\b/N]\rightarrow \h$ on the second factor. We recall the classical construction of symplectic implosion and its relation to our definition in Section \ref{sect:univimplosion}.

The $H$-Hamiltonian reduction of the implosion is related to the $G$-Hamiltonian reduction of the original space. Indeed, consider an inclusion $[\pt/H]\subset [\h/H]$ of an element of $\h$ which is always Lagrangian. Then we have
\begin{align*}
[X_{impl}/H] \times_{[\h/H]} [\pt/H] &= ([X/G]\times_{[\g/G]} [\b/B]) \times_{[\h/H]} [\pt/H] \\
&\cong [X/G]\times_{[\g/G]} ([\b/B] \times_{[\h/H]} [\pt/H]),
\end{align*}
where we regard
\[[\b/B]\times_{[\h/H]} [\pt/H]\]
as a Lagrangian in $[\g/G]$. Let us compute it in two opposite cases.

\begin{enumerate}
\item If $\pt\hookrightarrow \h$ is the inclusion of the origin, then \[[\b/B] \times_{[\h/H]} [\pt/H]\cong [\n/B],\]
where we recall that $\n\subset \b$ is the kernel of the projection $\b\rightarrow \h$. 

Let us recall the nilpotent cone $\cN\subset \g$ of nilpotent elements of $\g$ \cite[Section 3.2]{CG}. Its preimage under the Grothendieck--Springer resolution $\widetilde{\g}\rightarrow \g$ is the so-called Springer resolution $\widetilde{\cN}$ and can be identified with $\T^*(G/B)$, the cotangent bundle of the flag variety \cite[Lemma 3.2.2]{CG}. We have the following commutative diagram
\[
\xymatrix{
[\b/B] \ar^{\sim}[d] & [\n/B] \ar^{\sim}[d] \ar@{_{(}->}[l] \\
[\tilde{\g}/G] \ar[d] & [\widetilde{\cN}/G] \ar[d] \ar@{_{(}->}[l] \\
[\g/G] & [\cN/G] \ar@{_{(}->}[l]
}
\]

In particular, $[\n/B]_{symp}\cong \widetilde{\cN}\cong \T^*(G/B)$ is a Hamiltonian $G$-space where the Hamiltonian structure is induced from the $G$-action of $G/B$.

\item If $\pt\hookrightarrow \h$ is the inclusion of a regular semisimple element $\lambda\in\h\subset \g$, then
\[[\b/B]\times_{[\h/H]} [\pt/H] \cong [\pt/H],\]
since any regular semisimple element of $\b$ is $B$-conjugate to an element of $H$. The underlying Hamiltonian $G$-space of $[\pt/H]$ is the adjoint orbit of $\lambda\in\h\subset \g$.
\end{enumerate}

We get the following statement.

\begin{thm}
\label{thm:reductionofimplosion}
The $H$-Hamiltonian reduction of the symplectic implosion $X_{impl}$ at the zero moment map value is isomorphic to the $G$-Hamiltonian reduction of $X$ with respect to the Hamiltonian $G$-space $\T^*(G/B)$.

The $H$-Hamiltonian reduction of $X_{impl}$ at a regular semisimple moment map value $\lambda\in \h$ is isomorphic to the $G$-Hamiltonian reduction of $X$ along the adjoint orbit of $\lambda$.
\end{thm}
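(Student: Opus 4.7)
The plan is to compute the $H$-Hamiltonian reduction at a moment map value $\lambda\in\h$ directly from the definition, exploiting associativity of derived fiber products to move the base-change to the ``implosion'' factor before restricting along $X$. Concretely, by Definition \ref{defn:symplimplosion} and the fact that $H$-Hamiltonian reduction at $\lambda$ is the Lagrangian intersection with the inclusion $[\pt_\lambda/H]\hookrightarrow[\h/H]$, I will rewrite
\[
[X_{impl}/H]\times_{[\h/H]}[\pt_\lambda/H]\ \cong\ [X/G]\times_{[\g/G]}\bigl([\b/B]\times_{[\h/H]}[\pt_\lambda/H]\bigr),
\]
using that $[X_{impl}/H]=[X/G]\times_{[\g/G]}[\b/B]$ via the projection $[\b/B]\to[\b/N]/H^{\,}=[\h/H]$. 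This reduces the theorem to identifying the Lagrangian $[\b/B]\times_{[\h/H]}[\pt_\lambda/H]\to[\g/G]$ in each of the two cases.

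For $\lambda=0$, the preimage of the origin under $\b\to\h$ is the nilpotent radical $\n$, so $[\b/B]\times_{[\h/H]}[\pt/H]\cong[\n/B]$. To identify the resulting Hamiltonian $G$-space I will use the commutative diagram relating the Grothendieck--Springer and Springer resolutions,
\[
[\widetilde{\g}/G]\cong[\b/B],\qquad [\widetilde{\cN}/G]\cong[\n/B],
\]
together with the classical isomorphism $\widetilde{\cN}\cong\T^*(G/B)$ of \cite[Lemma 3.2.2]{CG}. Since $[\n/B]_{symp}=\widetilde{\cN}$ inherits its Hamiltonian $G$-structure as the cotangent bundle of the flag variety, the Lagrangian $[\n/B]\to[\g/G]$ encodes exactly the Hamiltonian $G$-space $\T^*(G/B)$, and the intersection $[X/G]\times_{[\g/G]}[\n/B]$ is by definition the $G$-Hamiltonian reduction of $X$ with respect to $\T^*(G/B)$.

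For $\lambda$ regular semisimple the key observation is that the fiber of $\b\to\h$ over $\lambda$, with its residual $N$-action, is $N$-equivariantly isomorphic to a point. This follows from the fact that any $x\in\b$ with semisimple part $\lambda$ and $\lambda$ regular is $N$-conjugate to $\lambda$ itself (the centralizer of $\lambda$ in $\b$ is $\h$, and $\b=\h\oplus\n$, so the map $N\to \lambda+\n$ sending $n\mapsto \Ad_n\lambda$ is an isomorphism). Hence $[\b/B]\times_{[\h/H]}[\pt_\lambda/H]\cong[\pt/H]$ as a stack over $[\g/G]$, realized by the inclusion of $\lambda$. Its underlying symplectic scheme is the adjoint orbit $G\cdot\lambda\cong G/H$ with the Kirillov--Kostant--Souriau form, so the intersection recovers the $G$-Hamiltonian reduction of $X$ along the adjoint orbit of $\lambda$ as defined in Section \ref{sect:hamreduction}.

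The main technical step is the regular semisimple identification: one must check $N$-equivariantly (not just set-theoretically) that the derived fiber $[\b/B]\times_{[\h/H]}[\pt_\lambda/H]$ is classical and reduces to $[\pt/H]$. Transversality of $\b\to\h$ at a regular semisimple value (so the derived fiber agrees with the classical one) plus the free transitive action of $N$ on $\lambda+\n$ handles this cleanly. The $\lambda=0$ case is purely formal once the Springer-resolution identification is invoked, so no further obstacle arises there.
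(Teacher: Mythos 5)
Your proposal is correct and follows essentially the same route as the paper: the same associativity rearrangement reducing everything to identifying the Lagrangian $[\b/B]\times_{[\h/H]}[\pt_\lambda/H]\rightarrow[\g/G]$, then $[\n/B]$ with $[\n/B]_{symp}\cong\widetilde{\cN}\cong\T^*(G/B)$ at $\lambda=0$ and $[\pt/H]$ (the adjoint orbit) at regular semisimple $\lambda$. Your added justification that $N$ acts simply transitively on $\lambda+\n$ is a slightly more explicit version of the paper's remark that any regular semisimple element of $\b$ is $B$-conjugate into $\h$.
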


\subsection{Some generalizations}

The definition of symplectic implosion (Definition \ref{defn:symplimplosion}) admits an immediate generalization to the quasi-Hamiltonian case since we have a similar Lagrangian correspondence there as well.

\begin{defn}
The \textit{group-valued symplectic implosion} of a quasi-Hamiltonian $G$-space $X$ is a quasi-Hamiltonian $H$-space
\[X_{qimpl} = [X/G]\times_{\left[\frac{G}{G}\right]} \left[\frac{B}{N}\right].\]
\end{defn}

The relation between $H$-quasi-Hamiltonian reduction of implosion and $G$-quasi-Hamiltonian reduction of the original space is similar to the Lie algebra case, so let us just state the result.

Let $\cN_G\subset G$ be the variety of unipotent elements of $G$. Its pullback $\widetilde{\cN}_G$ under the group version of the Grothendieck--Springer resolution $\widetilde{G}\rightarrow G$ can be identified with
\[\widetilde{\cN}_G\cong G\times^B N\]
where $B$ acts on $N$ by conjugation. This generalizes the Lie algebra case where \[\widetilde{\cN}\cong G\times^B \n\cong \T^*(G/B).\]

Since
\[\left[\frac{N}{B}\right]\cong [\pt/H]\times_{\left[\frac{H}{H}\right]} \left[\frac{B}{B}\right]\]
is a Lagrangian intersection, the projection $\left[\frac{N}{B}\right]\rightarrow \left[\frac{G}{G}\right]$ is Lagrangian. Therefore,
\[\widetilde{\cN}_G \cong \left[\frac{N}{B}\right]\times_{\left[\frac{G}{G}\right]} G,\] carries a natural quasi-Hamiltonian $G$-structure as described in Section \ref{sect:qhamreduction}.

\begin{thm}
The $H$-quasi-Hamiltonian reduction of the group-valued symplectic implosion $X_{qimpl}$ at the unit moment map value is isomorphic to the $G$-quasi-Hamiltonian reduction of $X$ with respect to the quasi-Hamiltonian $G$-space $\widetilde{\cN}_G$.

The $H$-quasi-Hamiltonian reduction of $X_{qimpl}$ at a regular semisimple moment map value $\lambda\in H$ is isomorphic to the $G$-quasi-Hamiltonian reduction of $X$ along the $G$-conjugacy class of $\lambda$.
\end{thm}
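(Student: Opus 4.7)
The plan is to adapt the argument preceding Theorem~\ref{thm:reductionofimplosion} to the multiplicative setting, with Corollary~\ref{cor:multGScorrespondence} playing the role of Corollary~\ref{cor:GScorrespondence}. I would begin by observing that the inclusion $[\pt/H]\hookrightarrow\left[\frac{H}{H}\right]$ of any element $\lambda\in H$ is Lagrangian (it is the self-intersection of the two maps $[\pt/H]\rightrightarrows\left[\frac{H}{H}\right]$ sending the basepoint to $\lambda$ and to $e$ respectively, composed with the appropriate morphism), and then use associativity of derived fiber products to rewrite
\[
\left[X_{qimpl}/H\right]\times_{\left[\frac{H}{H}\right]}[\pt/H]\;\cong\;[X/G]\times_{\left[\frac{G}{G}\right]}\left(\left[\frac{B}{B}\right]\times_{\left[\frac{H}{H}\right]}[\pt/H]\right).
\]
By Theorem~\ref{thm:lagrcomp} the right-hand side inherits a canonical $0$-shifted symplectic structure from the composition of Lagrangian correspondences, so the theorem reduces to identifying the inner fiber product, viewed as a Lagrangian in $\left[\frac{G}{G}\right]$, in the two cases.

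When $\lambda=e$, the preimage of $e$ under $B\twoheadrightarrow H$ is exactly $N$, so at the level of underlying stacks the inner fiber product is $\left[\frac{N}{B}\right]$ with $B$ acting on $N$ by conjugation. The paper has already exhibited $\left[\frac{N}{B}\right]\to\left[\frac{G}{G}\right]$ as a Lagrangian whose underlying quasi-Hamiltonian $G$-space is $\widetilde{\cN}_G\cong G\times^B N$, which immediately yields the first claim.

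For regular semisimple $\lambda\in H$, the key input is the multiplicative analogue of the fact used in the Lie algebra case: every element of the coset $\lambda N\subset B$ is $N$-conjugate to $\lambda$. This follows because the map $N\to N$ sending $n$ to $\Ad(\lambda^{-1})(n^{-1})\cdot n$ has differential $\Ad(\lambda^{-1})-\id$ at the identity, which is invertible on $\n$ by regular semisimplicity of $\lambda$, and this promotes to an isomorphism of varieties using the nilpotency of $N$. Hence the inner fiber product is equivalent to $[\pt/H]$, and its image in $\left[\frac{G}{G}\right]$ has underlying quasi-Hamiltonian $G$-space the conjugacy class $G/H$ of $\lambda$, giving the second claim.

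The main obstacle I anticipate is verifying that the Lagrangian structures on these identifications coincide with the standard ones and not merely their underlying stacks. For the unit case this is automatic, since $\widetilde{\cN}_G$ was defined in the paper precisely as the underlying quasi-Hamiltonian $G$-space of the composite $\left[\frac{N}{B}\right]\to\left[\frac{G}{G}\right]$. For the regular semisimple case one must match the Lagrangian structure on $[\pt/H]\to\left[\frac{G}{G}\right]$ produced by the composition of correspondences with the one describing the conjugacy class of $\lambda$ as a quasi-Hamiltonian $G$-space; I expect this to reduce to a uniqueness-of-Lagrangian-structures argument in the relevant $1$-shifted symplectic stack, in the spirit of the contractibility clause of Lemma~\ref{lm:BLagrangian}.
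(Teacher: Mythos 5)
Your argument is correct and is essentially the proof the paper intends: the paper explicitly defers to the Lie-algebra case ("the relation \dots is similar to the Lie algebra case, so let us just state the result"), and your adaptation --- associativity of the fiber product, identifying $\left[\frac{B}{B}\right]\times_{\left[\frac{H}{H}\right]}[\pt/H]$ with $\left[\frac{N}{B}\right]$ at the unit and with $[\pt/H]$ at a regular semisimple $\lambda$ --- is exactly that adaptation. Your multiplicative conjugation lemma (surjectivity of $n\mapsto\Ad(\lambda^{-1})(n^{-1})\cdot n$ on $N$) supplies the one detail the paper leaves implicit, and the uniqueness-of-isotropic-structures remark disposes of the compatibility of Lagrangian structures just as in the paper's treatment of conjugacy classes.
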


Let us also give a version of implosion with more general parabolic subgroups. We will only give it in the group-valued case, the Lie algebra case is identical. Let $P\subset G$ be a parabolic subgroup with $U\subset P$ the unipotent radical and $M=P/U$ the Levi factor.

\begin{defn}
The \textit{partial group-valued symplectic implosion} of a quasi-Hamiltonian $G$-space $X$ is a quasi-Hamiltonian $M$-space
\[X_{qimpl} = [X/G]\times_{\left[\frac{G}{G}\right]} \left[\frac{P}{U}\right].\]
\end{defn}

For instance, if $P=G$, the implosion is isomorphic to $X$ again, so partial symplectic implosions interpolate between the original quasi-Hamiltonian space $X$ in the case $P=G$ and the imploded space $X_{qimpl}$ in the case $P=B$, a Borel subgroup.

\subsection{Universal implosion}
\label{sect:univimplosion}

In this section we relate our definition of symplectic implosion with the one present in the literature (see \cite{GJS} for the case of real symplectic manifolds and \cite{DKS} for the case of holomorphic symplectic manifolds).

Let $G=SL_n(\bC)$, $B\subset G$ the subgroup of upper-triangular matrices, $\b$ its Lie algebra and $H\subset G$ the subgroup of diagonal matrices. Dancer--Kirwan--Swann \cite{DKS} show that the space $Q=[G\times^B \b]^{\aff}$, the affinization of the stack $G\times^B \b$, carries a stratified holomorphic symplectic structure together with a $G\times H$-action making it a stratified Hamiltonian $(G\times H)$-space. This space is known as the \emph{universal implosion} space for the following reason. Given a Hamiltonian $G$-space $X$ \cite{DKS} define the holomorphic symplectic implosion of $X$ to be the $G$-Hamiltonian reduction of $X\times Q$. It carries a residual $H$-action and is, moreover, a Hamiltonian $H$-space.

Let us give a similar description of the symplectic implosion as given by Definition \ref{defn:symplimplosion}. We return to the general setting of a split connected reductive group $G$.

The group $G$ has two commuting $G$ actions given by the left and right action, so $\T^* G$ is a Hamiltonian $(G\times G)$-space. For any Hamiltonian $G$-space we have
\begin{equation}
\label{eq:cotangentid}
(X\times \T^*G) // G\cong X,
\end{equation}
so $\T^* G$ acts as a kind of identity.

As both symplectic implosion and Hamiltonian reduction are fiber products, the operations commute. Therefore, we have
\[X_{impl}\cong ((X\times \T^*G)//G)_{impl}\cong (X\times (\T^*G)_{impl})//G,\]
where $(\T^*G)_{impl}$ is a Hamiltonian $(G\times H)$-space.

\begin{prop}
\label{prop:univimplosion}
We have an isomorphism of $(G\times H)$-spaces
\[(\T^*G)_{impl}\cong G\times^B \b.\]
\end{prop}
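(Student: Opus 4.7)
The plan is to unwind the definition $(\T^*G)_{impl} = [\T^*G/G]\times_{[\g/G]}[\b/N]$ by first simplifying $[\T^*G/G]$ using the freeness of the $G$-action on $\T^*G$.

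I would take the Hamiltonian $G$-action on $\T^*G$ to be right translation, with the residual surviving $G$-action given by left translation. Using the left trivialization $\T^*G\cong G\times\g$ together with the Killing form identification $\g\cong\g^*$, the right action reads $(g,x)\cdot h = (gh,\Ad_{h^{-1}}x)$ with moment map $(g,x)\mapsto x$; since this action is free on the $G$-factor, the orbit map $[(g,x)]\mapsto \Ad_g x$ gives an isomorphism $[\T^*G/G]\xrightarrow{\sim}\g$. Under this identification the morphism $[\T^*G/G]\to[\g/G]$ becomes the canonical quotient $\g\to[\g/G]$ and the residual left-translation action becomes the adjoint $G$-action on $\g$.

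Next, to compute $\g\times_{[\g/G]}[\b/N]$, I would use the presentation $[\b/N]\cong[G\times^N\b/G]$ where $G$ acts on $G\times^N\b$ by left translation on the first factor and the $G$-equivariant map to $\g$ is $[(g,y)]\mapsto\Ad_g y$. Since $\g\to[\g/G]$ is the tautological $G$-torsor, the fiber product is canonically the total space of this $G$-torsor pulled back to $[G\times^N\b/G]$, which is $G\times^N\b$ itself. A direct $T$-point check corroborates this: a point is given by a pair $(y,g)\in\b\times G$ modulo the $N$-action $n\cdot(y,g)=(\Ad_n y, gn^{-1})$.

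The identification with $G\times^B\b$ as $(G\times H)$-spaces then follows by observing that $G\times^N\b$ carries a free $H=B/N$-action, induced from right $B$-translation on the $G$-factor, which commutes with the residual left $G$-action and covers the moment-map projection $(g,y)\mapsto p(y)$ to $\h$; the $H$-quotient is precisely the Grothendieck--Springer resolution $\widetilde{\g} = G\times^B\b$. The main obstacle is the careful bookkeeping of residual $(G\times H)$-equivariant structures through this identification, reconciling the $N$-quotient appearing in the fiber-product definition with the $B$-quotient in the target.
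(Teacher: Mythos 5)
Your core computation is correct and follows essentially the same route as the paper. The paper pulls the Lagrangian $[\b/B]\rightarrow \overline{[\g/G]}\times[\h/H]$ back along $\h\rightarrow[\h/H]$ and then along $\g\rightarrow[\g/G]$ via two Cartesian squares, which is precisely your identification $\g\times_{[\g/G]}[\b/N]\cong G\times^N\b$; it then packages the conclusion through the identity $(X\times\T^*G)//G\cong X$ rather than through your direct simplification $[\T^*G/G]\cong\g$, but these amount to the same thing. Up to and including your $T$-point check, you have correctly established $(\T^*G)_{impl}\cong G\times^N\b$ as a $(G\times H)$-space, which is exactly what the paper's own proof concludes and what its introduction asserts.

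Your final paragraph, however, is a genuine error rather than bookkeeping. Passing to the quotient by the free $H=B/N$-action does not ``identify'' $G\times^N\b$ with $G\times^B\b$; it replaces it by a different space. Indeed $\dim(G\times^N\b)=\dim G+\dim\h$ while $\dim(G\times^B\b)=\dim G$, so the two are not isomorphic; moreover $G\times^B\b=\widetilde{\g}$ carries no residual $H$-action and $\dim G$ need not even be even (e.g. for $SL_2$), so it cannot be the underlying space of a Hamiltonian $(G\times H)$-space. There is no $N$-versus-$B$ discrepancy to reconcile: the superscript $B$ in the displayed statement is a slip for $N$, consistently with the last line of the paper's proof, with the introduction, and with the Dancer--Kirwan--Swann description of the universal implosion as $[G\times^N\b]^{\aff}$. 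Delete the final reconciliation step and conclude $(\T^*G)_{impl}\cong G\times^N\b$.
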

\begin{proof}
Corollary \ref{cor:GScorrespondence} gives a Lagrangian morphism
\[[\b/B]\rightarrow \overline{[\g/G]}\times [\h/H],\]
so let's find the corresponding Hamiltonian $(G\times H)$-space.

First, pulling back this morphism along the universal bundle $\h\rightarrow [\h/H]$ we get a Cartesian square
\[
\xymatrix{
[\b/N] \ar[r] \ar[d] & [\g/G]\times \h \ar[d] \\
[\b/B] \ar[r] & [\g/G]\times [\h/H]
}
\]

Pulling back this morphism along $\g\rightarrow [\g/G]$ we get a Cartesian square
\[
\xymatrix{
G\times^N \b \ar[r] \ar[d] & \g\times \h \ar[d] \\
[\b/N] \ar[r] & [\g/G]\times \h
}
\]
In particular, $[\b/B]_{symp}\cong G\times^N \b$.

Thus $G\times^N \b$ is the Hamiltonian $(G\times H)$-space satisfying
\[X_{impl} \cong (X\times G\times^N\b) // G\]
for any Hamiltonian $G$-space $X$. Substituting $X=\T^*G$ and using equation \eqref{eq:cotangentid} we conclude that
\[(\T^* G)_{impl} \cong G\times^N\b.\]
\end{proof}

One can give the following modular interpretation of $(\T^*G)_{impl}$: it parametrizes Borel subgroups $B\subset G$ together with an element in $\Lie(B)$ and an element in $B/[B, B]$.

Similarly, in the group case we have a quasi-Hamiltonian $(G\times G)$-space $G\times G$ with the property that
\[(X\times G\times G)// G\cong X\]
for any quasi-Hamiltonian space $X$. The universal group-valued implosion is then
\[(G\times G)_{qimpl} = G\times^N B.\]

Again, its affinization for $G=SL_n(\bC)$ is the universal group-valued implosion space that Dancer and Kirwan use in \cite{DK} to define symplectic implosion for quasi-Hamiltonian spaces.

\subsection{Dual implosion}
\label{sect:symplexplosion}

In this section we define a procedure dual to that of implosion, i.e. a way to go from Hamiltonian $H$-spaces to Hamiltonian $G$-spaces. This procedure turns out to be adjoint in a precise sense to symplectic implosion, but not its inverse.

Calaque \cite[Section 4.2.2]{Cal} has defined a symmetric monoidal 1-category of Lagrangian correspondences $\LagrCorr^n_1$. Its objects are $n$-shifted symplectic stacks and morphisms from $X$ to $Y$ are given by Lagrangian correspondences $X\leftarrow L\rightarrow Y$. Haugseng \cite[Section 11]{Ha} extended this definition to a symmetric monoidal $(\infty, 1)$-category $\LagrCorr^n_{(\infty, 1)}$ of Lagrangian correspondences and showed that it has duals. Moreover, he defined a symmetric monoidal $(\infty, m)$-category $\IsotCorr^n_{(\infty, m)}$ for any $m$ of isotropic correspondences whose objects are $n$-shifted symplectic stacks, morphisms from $X$ to $Y$ are given by isotropic morphisms $L\rightarrow \overline{X}\times Y$ and higher morphisms are given by iterated correspondences. He furthermore showed that $\IsotCorr^n_{(\infty, m)}$ has duals.

Let us give an explicit description of the duals and adjoints in $\IsotCorr^n_{(\infty, m)}$ (see \cite[Lemma 9.3]{Ha}). Given an $n$-shifted symplectic stack $X$ its dual is the opposite symplectic stack $\overline{X}$ with the duality data given by the diagonal Lagrangian. Given a morphism $X\leftarrow L\rightarrow Y$ in $\IsotCorr^n_{(\infty, 2)}$ its right adjoint is given by the morphism $Y\leftarrow L\rightarrow X$ with the counit given by the correspondence of correspondences
\[
\xymatrix{
& L\times_X L \ar[dl] \ar[dr] & \\
X & L \ar[u] \ar[d] & X \\
& X \ar[ul] \ar[ur] &
}
\]

Let us discuss the operation adjoint to symplectic implosion in the above sense. We will focus on the quasi-Hamiltonian case for simplicity. The Lie algebra and elliptic cases are treated similarly.

Recall that the group-valued implosion of a Lagrangian $L\rightarrow \left[\frac{G}{G}\right]$ was defined to be the Lagrangian intersection $L\times_{\left[\frac{G}{G}\right]} \left[\frac{\widetilde{G}}{G}\right]$. Dually, given a Lagrangian $L\rightarrow \left[\frac{H}{H}\right]$ we can consider its intersection
\[L\times_{\left[\frac{H}{H}\right]} \left[\frac{\widetilde{G}}{G}\right]\cong [(L\times_{\left[\frac{H}{H}\right]} \widetilde{G})/G].\]

\begin{defn}
The \textit{dual symplectic implosion} of a quasi-Hamiltonian $H$-space $X$ is a quasi-Hamiltonian $G$-space \[X_{dimpl} = [X/H] \times_{\left[\frac{H}{H}\right]} \widetilde{G}.\]
\end{defn}

\begin{remark}
Dual symplectic implosions of conjugacy classes in Levis have previously appeared in \cite{Boa} where they were interpreted in terms of meromorphic connections on the disk.
\end{remark}

As before, it can be given as a Hamiltonian reduction with respect to the universal dual implosion $(H\times H)_{dimpl} = G\times^N 
B$ which is a quasi-Hamiltonian $(G\times H)$-space.

Let us compare quasi-Hamiltonian reductions of the dual implosion and the original space. Let $\cO\subset G$ be a conjugacy class. Then the quasi-Hamiltonian reduction of $X_{dimpl}$ with respect to $G$ along the conjugacy class $\cO$ is
\[X_{dimpl}//_\cO G\cong [X/H]\times_{\left[\frac{H}{H}\right]} \left[\frac{B}{B}\right] \times_{\left[\frac{G}{G}\right]} [\cO/G].\]

Let us compute the fiber product on the right in the two opposite cases.

\begin{enumerate}
\item Suppose $\cO$ is the unit conjugacy class. Then
\[\left[\frac{\widetilde{G}}{G}\right]\times_{\left[\frac{G}{G}\right]} [\pt/G]\cong [(G/B)/G]\cong \B B.\]

Here we regard $\B B$ as a Lagrangian in $\left[\frac{H}{H}\right]$ with the morphism being the composite
\[\B B\rightarrow \B H\rightarrow \left[\frac{H}{H}\right]\]
with the latter map being the inclusion of the unit. The underlying quasi-Hamiltonian $H$-space of $\B B$ is $\B N$.

\item Suppose $\cO$ is the $G$-conjugacy class of a regular semisimple element $\lambda\in H\subset G$. Then its image $\widetilde{\cO}\subset \widetilde{G}$ in the Grothendieck--Springer resolution is a $|W|:1$ cover of $\cO$ where $W$ is the Weyl group. Therefore,
\[[\widetilde{\cO}/G]\rightarrow [\cO/G]\]
is also a $|W|:1$ cover. But $[\cO/G]\cong \B H$, so the quasi-Hamiltonian $H$-space corresponding to $[\widetilde{\cO}/G]$ is identified with the finite set $W$ with the moment map given by sending the whole set to $\lambda\in H$. As we assume $G$ (and hence $H$) is connected, the action of $H$ on $W$ is necessarily trivial.
\end{enumerate}

\begin{thm}
Let $X$ be a quasi-Hamiltonian $H$-space.

The $G$-quasi-Hamiltonian reduction of $X_{dimpl}$ along the unit coincides with the $H$-quasi-Hamiltonian reduction of $X$ along the quasi-Hamiltonian $H$-space $\B N$.

The $G$-quasi-Hamiltonian reduction of $X_{dimpl}$ along the conjugacy class of a regular semisimple element $\lambda\in H$ is a $|W|:1$ cover of the $H$-quasi-Hamiltonian reduction of $X$ along $\lambda$.
\end{thm}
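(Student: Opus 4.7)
The plan is to recognize the statement as an assembly of the two explicit fiber product computations carried out just before the theorem, using the Lagrangian composition formalism. Write the dual implosion as
\[
X_{dimpl} = [X/H]\times_{\left[\frac{H}{H}\right]} \widetilde{G},
\]
so that by associativity of homotopy fiber products we have
\[
X_{dimpl}//_\cO G \cong [X/H]\times_{\left[\frac{H}{H}\right]} K_\cO,\qquad K_\cO := \left[\frac{\widetilde{G}}{G}\right]\times_{\left[\frac{G}{G}\right]}[\cO/G].
\]
The Lagrangian structure on $K_\cO\to \left[\frac{H}{H}\right]$ is automatic because it arises as the composition of the Lagrangian $[\cO/G]\to \left[\frac{G}{G}\right]$ with the Lagrangian correspondence \eqref{eq:multGScorrespondence} via Theorem \ref{thm:lagrcomp}. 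So the entire question reduces to computing $K_\cO$ as a Lagrangian (equivalently as a quasi-Hamiltonian $H$-space) in the two cases.

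For the unit conjugacy class, I would simply invoke the computation already made in the paragraph preceding the theorem: the fiber of $G/B\to \left[\frac{G}{G}\right]$ over $[\pt/G]$ identifies $K_{\{e\}}$ with $[(G/B)/G]\cong \B B$, and the induced map $\B B\to\left[\frac{H}{H}\right]$ factors as $\B B\to \B H\to\left[\frac{H}{H}\right]$, the last map being the inclusion of the unit. Under the dictionary of Section \ref{sect:qhamreduction}, the underlying quasi-Hamiltonian $H$-space of this Lagrangian is $\B N$ (with the residual $H$-action coming from $B/N\cong H$ and trivial moment map landing at the identity). Substituting back gives $X_{dimpl}//_e G\cong [X/H]\times_{\left[\frac{H}{H}\right]}\B B$, which is exactly the $H$-quasi-Hamiltonian reduction of $X$ along $\B N$.

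For the regular semisimple case, the key geometric input is that the fiber of $\widetilde{G}\to G$ over a regular semisimple $\lambda\in H$ is the set of Borel subgroups containing $\lambda$; since such $\lambda$ lies in a unique maximal torus, this fiber is in bijection with $W=N_G(H)/H$. Transferred to the stacky quotient, this gives $[\widetilde{\cO}/G]\to [\cO/G]$ as a $|W|\!:\!1$ cover, and since $[\cO/G]\cong\B H$, we get $K_\cO\cong W\times \B H$ with trivial $H$-action on $W$ and constant moment map $W\to H$ sending everything to $\lambda$. Thus as a Lagrangian in $\left[\frac{H}{H}\right]$, $K_\cO$ is the disjoint union of $|W|$ copies of the point $\{\lambda\}\hookrightarrow \left[\frac{H}{H}\right]$. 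Consequently
\[
X_{dimpl}//_\cO G \cong \bigsqcup_{w\in W} [X/H]\times_{\left[\frac{H}{H}\right]}[\pt/H]_\lambda \cong W\times (X//_\lambda H),
\]
which is the claimed $|W|\!:\!1$ cover.

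The main (and only nontrivial) obstacle is the regular semisimple computation, namely the identification of $[\widetilde{\cO}/G]$ with $|W|$ copies of $[\pt/H]$ as a Lagrangian correspondence (not merely as a stack), including the verification that the $H$-action on the fiber $W$ is trivial — which follows from the fact that $H$ is connected, so it must act trivially on the discrete set $W$. Once these identifications of $K_\cO$ as a Lagrangian are in place, both statements of the theorem are immediate formal consequences of Theorem \ref{thm:lagrcomp} and associativity of fiber products; no further differential-geometric computation is needed.
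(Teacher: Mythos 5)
Your proposal is correct and follows essentially the same route as the paper: the paper also reduces to computing the composed Lagrangian $\left[\frac{\widetilde{G}}{G}\right]\times_{\left[\frac{G}{G}\right]}[\cO/G]$ in $\left[\frac{H}{H}\right]$ via associativity of fiber products, identifies it with $\B B$ (underlying space $\B N$) at the unit and with $|W|$ copies of $[\pt/H]$ over a regular semisimple $\lambda$ (using connectedness of $H$ to see the action on $W$ is trivial), and then invokes Theorem \ref{thm:lagrcomp}. No discrepancies to report.
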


Finally, we can use dual symplectic implosion to show that the procedure of symplectic implosion is not invertible.

\begin{prop}
\label{prop:implosioninvert}
The Lagrangian correspondence
\[
\xymatrix{
&\left[\frac{B}{B}\right] \ar[dl] \ar[dr]& \\
\left[\frac{G}{G}\right] && \left[\frac{H}{H}\right]
}
\]
defining group-valued symplectic implosion is not invertible if the semisimple rank of $G$ is nonzero.
\end{prop}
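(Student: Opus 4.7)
The plan is to derive a contradiction by combining the two reduction theorems just established in this section. Suppose for contradiction that the correspondence is invertible as a $1$-morphism in the $(\infty,2)$-category $\IsotCorr^1_{(\infty,2)}$ of isotropic (hence Lagrangian) correspondences. Since this category has adjoints, every invertible $1$-morphism coincides with its right adjoint as an inverse. By Haugseng's explicit description of right adjoints recalled above, the right adjoint of the Grothendieck--Springer correspondence $\left[\frac{G}{G}\right]\leftarrow \left[\frac{B}{B}\right]\rightarrow \left[\frac{H}{H}\right]$ is the reversed correspondence $\left[\frac{H}{H}\right]\leftarrow \left[\frac{B}{B}\right]\rightarrow \left[\frac{G}{G}\right]$, that is, the dual Grothendieck--Springer correspondence. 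Consequently, under the invertibility hypothesis the composite of implosion followed by dual implosion would be equivalent to the identity correspondence on $\left[\frac{G}{G}\right]$, and therefore $(X_{qimpl})_{dimpl}\cong X$ as quasi-Hamiltonian $G$-spaces for every quasi-Hamiltonian $G$-space $X$; in particular the $G$-quasi-Hamiltonian reductions of the two sides along any conjugacy class would agree.

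The next step is to contradict this by choosing $X=\cO_\lambda$, the conjugacy class of a regular semisimple element $\lambda\in H\subset G$ equipped with its natural quasi-Hamiltonian $G$-structure. A direct Lagrangian intersection computation yields $\cO_\lambda//_{\cO_\lambda} G\cong \B H$. On the other hand, the implosion reduction theorem identifies $(\cO_\lambda)_{qimpl}//_\lambda H$ with $\cO_\lambda//_{\cO_\lambda} G\cong \B H$, and the dual implosion reduction theorem then identifies $((\cO_\lambda)_{qimpl})_{dimpl}//_{\cO_\lambda} G$ with a $|W|:1$ cover of $\B H$. The explicit description of this cover shows it is a trivial $W$-torsor, so it is literally a disjoint union of $|W|$ copies of $\B H$. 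This agrees with $\B H$ only when $|W|=1$, which is equivalent to the semisimple rank of $G$ being zero, completing the contradiction.

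The main subtle point is the identification of the inverse of an invertible Lagrangian correspondence with its right adjoint; this relies on working in the $(\infty,2)$-categorical enhancement $\IsotCorr^1_{(\infty,2)}$ and on Haugseng's explicit formula for adjoints. Once this formal input is granted, the remainder of the argument is a straightforward combination of the two reduction theorems, plus the trivial computation $\cO_\lambda//_{\cO_\lambda} G\cong \B H$. A more hands-on alternative would be to compute the composite stack $\left[\frac{B}{B}\right]\times_{\left[\frac{H}{H}\right]} \left[\frac{B}{B}\right]$ directly and to exhibit extra strata, indexed by nontrivial Weyl group elements via the Bruhat decomposition, which prevent it from being equivalent to the diagonal $\left[\frac{G}{G}\right]$; this avoids the higher-categorical machinery at the cost of a more laborious computation.
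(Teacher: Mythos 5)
Your argument is correct, but it reaches the contradiction by a genuinely different route from the paper's. Both proofs share the same formal opening move: invertibility in $\LagrCorr^1_{(\infty,1)}$ forces the inverse to agree with the right adjoint in $\IsotCorr^1_{(\infty,2)}$, which by Haugseng's description is the reversed correspondence, so the unit and counit of that adjunction would have to be equivalences. At that point the paper finishes in one line of linear algebra: the counit $2$-morphism has as part of its data the map $\left[\frac{B}{B}\right]\rightarrow\left[\frac{H}{H}\right]$, which is not an equivalence because on tangent complexes at the unit it is $\b[1]\oplus\b\rightarrow\h[1]\oplus\h$, which has a kernel. You instead test the would-be equivalence $(X_{qimpl})_{dimpl}\simeq X$ against the concrete Lagrangian $\cO_\lambda$ for $\lambda\in H$ regular semisimple, and combine the two reduction theorems to produce a $|W|{:}1$ discrepancy between the reductions of the two sides. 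This is heavier machinery than the paper needs (it uses both reduction theorems as black boxes, and the second one is only sketched in the paper for the group case), but it is more geometric and makes visible \emph{why} implosion loses information, namely the Weyl group. One small inaccuracy: $\cO_\lambda//_{\cO_\lambda}G=[\cO_\lambda/G]\times_{\left[\frac{G}{G}\right]}[\cO_\lambda/G]$ is a derived self-intersection whose classical truncation is $\B H$ but whose tangent complex is $\h[1]\oplus\h[-1]$, so it is not literally $\B H$. This does not damage the argument: whatever this stack $R$ is, it is nonempty with connected classical truncation, so $R\simeq W\times R$ forces $|W|=1$; but the identification should be stated at the level of classical truncations or left as an unspecified connected stack. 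Your proposed ``hands-on alternative'' via the Bruhat stratification of $\left[\frac{B}{B}\right]\times_{\left[\frac{H}{H}\right]}\left[\frac{B}{B}\right]$ would also work and is closer in spirit to the paper's direct attack on the (co)unit.
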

\begin{proof}
Suppose that a $1$-morphism $L$ from $X$ to $Y$ in $\LagrCorr^n_{(\infty, 1)}$ has an inverse. Then it has an inverse in $\IsotCorr^n_{(\infty, 1)}$ and hence in $\IsotCorr^n_{(\infty, 2)}$. But the latter $(\infty, 2)$-category has adjoints and so the unit and counit of the adjunction for $L$ have to be equivalences.

The right adjoint to $\left[\frac{G}{G}\right]\leftarrow \left[\frac{B}{B}\right]\rightarrow \left[\frac{H}{H}\right]$ is $\left[\frac{H}{H}\right]\leftarrow \left[\frac{B}{B}\right]\rightarrow \left[\frac{G}{G}\right]$ with the counit given by the iterated correspondence
\[
\xymatrix{
& \left[\frac{B}{B}\right]\times_{\left[\frac{G}{G}\right]} \left[\frac{B}{B}\right] \ar[dl] \ar[dr] & \\
\left[\frac{H}{H}\right] & \left[\frac{B}{B}\right] \ar[u] \ar[d] & \left[\frac{H}{H}\right] \\
& \left[\frac{H}{H}\right] \ar[ul] \ar[ur]
}
\]

However, $\left[\frac{B}{B}\right]\rightarrow \left[\frac{H}{H}\right]$ is not an equivalence: the morphism on tangent complexes at the unit element is given by $\b[1]\oplus \b\rightarrow \h[1]\oplus \h$ which has a kernel.
\end{proof}

\end{document}